\theoremstyle{plain}
\newtheorem{theorem}{Theorem}[section]
\newtheorem*{thm*}{Theorem}
\newtheorem{proposition}[theorem]{Proposition}
\newtheorem{lemma}[theorem]{Lemma}
\newtheorem{corollary}[theorem]{Corollary}
\theoremstyle{definition}
\newtheorem{definition}[theorem]{Definition}
\newtheorem{example}[theorem]{Example}
\renewenvironment{proof}[1][\proofname]{\par
  \normalfont
  \topsep6\p@\@plus6\p@ \trivlist
  \item[\hskip\labelsep{\bfseries #1}\@addpunct{\bfseries.}]\ignorespaces
}{%
  \endtrivlist
}
\renewcommand{\proofname}{proof}
\theoremstyle{remark}
\newtheorem{remark}[theorem]{Remark}
\numberwithin{equation}{section}
\title{Tilting equivalence of finite almost derived algebraic cobordism for perfectoid algebras} 
\date{\today} 
\author{Yuki Kato}
\thanks{The author was supported by Grants-in-Aid for Scientific Research No.~23K03080, Japan Society for the Promotion of Science.}
\address{National Institute of Technology, Kurume College, 
	      1-1-1, Komorino, Kurume, Fukuoka, 830-8555 JAPAN}
\email{kato\_051@kurume-nct.ac.jp}
\subjclass{18N55 (primary), 18N70 (secondary)}
\keywords{Goodwillie calculus, derived algebraic cobordism, perfectoid algebras, $\A^1$-homotopy theory}
\newcommand{\A}{\mathbb{A}}
\newcommand{\Z}{\mathbb{Z}}
\newcommand{\K}{\mathbb{K}}
\newcommand{\Spec}{\mathrm{Spec}}
\newcommand{\Hom}{\mathrm{Hom}}
\newcommand{\Ext}{\mathrm{Ext}}
\newcommand{\sSet}{\mathrm{Set}_{\Delta}}
\newcommand{\CAlg}{\mathrm{CAlg}}
\newcommand{\Fun}{\mathrm{Fun}}
\newcommand{\Exc}{\mathrm{Exc}}
\newcommand{\tm}{\tilde{\mathfrak{m}}}
\newcommand{\MSp}{\mathrm{MSp}}
\newcommand{\Sp}{\mathrm{Sp}}
\newcommand{\MGL}{\mathrm{MGL}}
\newcommand{\E}{\mathbb{E}_{\infty}}
\def\qed{{\hfill $\Box$}}
\begin{document}
\thispagestyle{empty}
\begin{abstract}
In this paper, we prove tilting equivalence for the finite almost derived algebraic
cobordism spectrum $\mathrm{dMGL}^{a,\rm fin}$ of perfectoid algebras. More
precisely, if $V$ is an integral perfectoid valuation ring and $A$ is an
integral perfectoid $V$-algebra, then the tilting functor induces a weak
equivalence
\[
  \mathrm{dMGL}^{a,\rm fin}(A) \simeq \mathrm{dMGL}^{a,\rm fin}(A^\flat).
\]
This invariant is a finite syntomic, derived, and
non-$\mathbb{A}^1$-local version of algebraic cobordism, designed to
retain infinitesimal deformation data over mixed characteristic bases.
To prove the result, we first establish the corresponding finite
non-unital statement and isolate a form of excisive approximation for
pointed $\infty$-categories, including non-presentable ones. In the
locally finitely presentable case, this agrees with the framework of
Heuts. We also define approximation functors along natural
transformations and apply them to the comparison between periodic
algebraic cobordism and homotopy $K$-theory, obtaining Bott periodicity
and Gabber rigidity.
\end{abstract}

\maketitle
\section{Introduction}
\label{sec:introduction}
Motivic homotopy theory, as developed by Morel and Voevodsky~\cite{MV,VoeH}, is centered on $\mathbb{A}^1$-homotopy invariance. This principle serves as a primary source of the theory's strength; it yields robust invariants in algebraic geometry and leads to the algebraic cobordism spectrum $\MGL$ as well as other oriented cohomology theories. However, $\mathbb{A}^1$-homotopy invariance necessitates the d\'evissage property, which indicates that any such cohomology theory is equivalent to that of the underlying reduced subscheme. For instance, Weibel's homotopy $K$-theory $\mathrm{KH}$~\cite{weibel2} famously exhibits the d\'evissage property and completely annihilates the $K$-theory of nilpotent algebras. This implies that any $\mathbb{A}^1$-invariant cohomology theory remains completely insensitive to nilpotent ideals, resulting in identical evaluations on a scheme and its underlying reduced subscheme. Therefore, the consideration of $\mathbb{A}^1$-local objects inevitably involves the destruction of the cohomology theory of nilpotent non-unital algebras, including all nilpotent thickenings and infinitesimal deformation data. 

While this insensitivity often simplifies smooth geometry over a field, it significantly obstructs the study of mixed characteristic and perfectoid geometry. In these settings, phenomena are governed by adic topologies and towers of infinitesimal thickenings, making infinitesimal information an essential part of the geometric structure. A central problem is constructing cobordism-type invariants that retain essential deformation data by relinquishing strict $\mathbb{A}^1$-invariance. Derived algebraic cobordism $\mathrm{dMGL}$, introduced by Lowrey--Sch\"urg~\cite{LS16} and developed in a bivariant form by Annala~\cite{Ann21}, addresses this issue by utilizing quasi-smooth derived geometry instead of imposing $\mathbb{A}^1$-localization. The resulting theory effectively preserves the infinitesimal deformation data that classical motivic cobordism fails to capture. However, the unlocalized derived theory is significantly larger and more challenging to control directly. This paper's guiding idea is to extract a finite syntomic, non-unital fragment from this theory and study it using Goodwillie calculus.

Goodwillie calculus, introduced in~\cite{Goodwillie1,Goodwillie2,Goodwillie3},
may be viewed as a Taylor expansion of functors. Its linear part is
captured by excisive functors, and higher approximations form a tower
\[
  \cdots \to P^n(F) \to P^{n-1}(F) \to \cdots \to P^1(F) \to P^0(F).
\]
We apply this idea not only to functors, but also to pointed
$\infty$-categories. The category of non-unital algebras is naturally
pointed, with the zero algebra serving as both the initial and terminal
object, while square-zero extensions emerge as linear data in this pointed
context. Thus, the excisive approximation provides a method to isolate
and control infinitesimal deformation data without enforcing
$\mathbb{A}^1$-invariance. Our formulation is tailored for the
non-presentable categories that arise here; in the locally finitely
presentable case, it aligns with the Goodwillie calculus of
$\infty$-categories developed by Heuts~\cite{Heuts2018}, following
Lurie's $\infty$-categorical version of Goodwillie calculus~\cite[Chapter
6]{HA}.

The first main application is perfectoid geometry with almost mathematics. We define finite
non-unital derived algebraic cobordism $\mathrm{dMGL}^{\rm nu,f}$ and its almost
version $\mathrm{dMGL}^{a,\rm fin}$. For a perfectoid valuation ring $V$ and an
integral perfectoid $V$-algebra $A$, we prove that tilting preserves the
finite non-unital theory:
\[
 \mathrm{dMGL}^{\rm nu,f}(A) \simeq \mathrm{dMGL}^{\rm nu,f}(A^\flat)
\]
(Theorem~\ref{mainthm}). Transitioning to almost algebras reveals the primary
invariant emphasized in this paper. Specifically, if $V$ is an integral
perfectoid valuation ring, tilting induces a weak equivalence
\[
  \mathrm{dMGL}^{a,\rm fin}(A) \simeq \mathrm{dMGL}^{a,\rm fin}(A^\flat)
\]
for any integral perfectoid $V$-algebra $A$ (Corollary~\ref{almost-main}).

The same calculus also gives a second family of applications. Given a
natural transformation $\alpha:F\to G$, we define the approximation functors
$P^n_G(F)$ as the homotopy fiber of
\[
  G \to \mathrm{cok}\,\alpha \to P^n(\mathrm{cok}\,\alpha).
\]
We call $P^n_G(F)$ the Goodwillie approximation of $F$ to $G$. Applying
this construction to the canonical morphism from algebraic cobordism to
homotopy algebraic $K$-theory produces approximations of cobordism that
inherit key features of $K$-theory. In particular, we obtain Bott
periodicity and rigidity statements for these approximations, including
the lift of $K$-theoretic equivalences to periodic algebraic cobordism
(Theorem~\ref{theoremB}) and an analogue of Gabber rigidity
(Theorem~\ref{Rigidity}).

We briefly indicate the organization of the paper. In
Section~\ref{sec:Goodwillie}, we recall Goodwillie calculus for functors
and its higher excisive approximations. In Section~\ref{sec:n-excisive},
we develop the form of excisive approximation for pointed
$\infty$-categories necessary for non-unital algebras and compare it to
Heuts's theory in the finitely presentable case. In
Section~\ref{sec:perfectoid}, we construct the finite syntomic
non-unital and almost versions of derived algebraic cobordism and prove
the tilting invariance theorem and its almost corollary. Finally, in Section~\ref{sec:functor},
we introduce approximation functors along natural transformations and
apply them to the morphism $\MGL\to \K$, obtaining Bott periodicity and
Gabber rigidity for the resulting Goodwillie approximations.

\subsection*{Acknowledgments}
I would like to thank Masaki Hanamura for the helpful discussion about Sections~\ref{sec:Goodwillie} and~\ref{sec:n-excisive}. Our discussion generalizes the universal property of $n$-excisive approximation for $\infty$-categories related to functors that admit a right adjoint.
\paragraph{Use of AI tools.}
The author used Google's Gemini-pro 3.1 as a conversational research aid for brainstorming and refining mathematical formulations, and OpenAI's Prism (GPT-5.2) for editorial and expository assistance, including structural refinement. These tools improved the manuscript's clarity and indicated where abbreviated arguments warranted fuller exposition. All results, proofs, and final formulations were reviewed, verified, and approved by the author, who bears sole responsibility for the content of this work.

\section{A short review of Goodwillie calculus}
\label{sec:Goodwillie} In this section, following Lurie's
textbook~\cite[Chapter 6]{HA}, we recall the definition of Goodwillie
calculus for functors between $\infty$-categories. We then define higher
excisive objects of $\infty$-categories.  

The category $\sSet$ of simplicial sets has a standard simplicial model
structure called the {\it Kan--Quillen model structure}. The simplicial
nerve of $\sSet$ is denoted by $\mathcal{S}$, which is called the
$\infty$-category of {\it spaces} or {\it $\infty$-groupoids}.

Given a functor $F: \mathcal{C} \to \mathcal{D}$ of $\infty$-categories under suitable
conditions, one can regard excisive functors as linear approximations. In this
setting, the natural morphism $R^n(F) \to \Omega_{\mathcal{D}}\Sigma_{\mathcal{D}}R^n(F)$
is a weak equivalence, where $\Omega_{\mathcal{D}}$ and $\Sigma_{\mathcal{D}}$ denote
the loop and suspension functors, respectively.

For any finite set $S$,
$\mathbf{P}(S)$ denotes the set of
subsets of $S$, called the power set of $S$, and $\mathbf{P}_{ \le m}(S)$ denotes the set of subsets of $S$ whose cardinality is at most $m$. The sets $\mathbf{P}_{ <
m}(S)$, $\mathbf{P}_{ \ge m}(S)$, and $\mathbf{P}_{ > m}(S)$ are
defined similarly. 

Let $\mathcal{C}$ be an $\infty$-category. A functor $X: N(\mathbf{P}(S)) \to \mathcal{C}$ is called an $S$-cube. In the case $S=[n]=\{0,\,1,\,\ldots,\,n\}$, $S$-cubes are called $(n+1)$-cubes for any $n \ge -1$. Here, $[-1]$ is the empty set. We let $\mathrm{Cube}^m(\mathcal{C})$ denote the
$\infty$-category of $m$-cubes of $\mathcal{C}$ for any $m \ge 0$.  

\begin{definition}
Let $\mathcal{C}$ be an $\infty$-category and fix an integer $n \ge 0$. 
\begin{enumerate}[(1)]
\item An $n$-cube $X_\bullet$ is {\it Cartesian} if it is obtained as a right Kan extension of the cube $\mathrm{Cube}^n_{\ge 1 }(\mathcal{C})$.
\item An $n$-cube $X_\bullet$ is {\it strongly Cartesian} if it is obtained as
      a right Kan extension of the cube $\mathrm{Cube}^n_{ \ge n-1}(\mathcal{C})$.
\item An $n$-cube $X_\bullet$ is {\it coCartesian} if it is obtained as a left
      Kan extension of the cube $\mathrm{Cube}^n_{\le n-1}(\mathcal{C})$.
\item An $n$-cube $X_\bullet$ is {\it strongly coCartesian} if it is obtained
      as a left Kan extension of the cube $\mathrm{Cube}^n_{\le 1}(\mathcal{C})$.
\end{enumerate}
\end{definition}          
\begin{example}
Cartesian or coCartesian $0$-cubes are just weak equivalences, and any
$0$-cube is strongly Cartesian and coCartesian. Cartesian $1$-cubes and
strongly Cartesian $1$-cubes are homotopy Cartesian squares.
\end{example}

The definition of higher excisive functors is as follows: 
\begin{definition}[\cite{HA} p.1015, Definition 6.1.1.3]
Let $\mathcal{C}$ be an $\infty$-category admitting all finite colimits
and a final object, and $\mathcal{D}$ an $\infty$-category admitting all
finite limits. A functor $F: \mathcal{C} \to \mathcal{D}$ is {\it
$n$-excisive} if it sends strongly coCartesian $n$-cubes to Cartesian
$n$-cubes. In the $n = 1$ case, one-excisive functors are
simply called excisive functors. For any $n \ge 0$, let
$\mathrm{Exc}^n(\mathcal{C},\, \mathcal{D})$ denote the full subcategory
of $\Fun(\mathcal{C},\, \mathcal{D})$ spanned by $n$-excisive
functors.
\end{definition}

\begin{definition}[\cite{HA} p.1016, Definition 6.1.1.6]
An $\infty$-category $\mathcal{D}$ is {\it Goodwillie-differentiable} if
it admits finite limits and sequential colimits, and the sequential 
colimit functor
\[
   \varinjlim\colon \Fun(N(\Z_{\ge 0}),\,\mathcal{D}) \to \mathcal{D}
\]
commutes with finite limits. 
\end{definition}

\begin{example}
Any $\infty$-topos is Goodwillie-differentiable by definition. In particular,
the $\infty$-category $\mathcal{S}$ of spaces is Goodwillie-differentiable.
\end{example}

\begin{theorem}[\cite{HA} p.1016, Theorem 6.1.1.10]
\label{HATheorem6.1.1.10}
Let $\mathcal{C}$ be an $\infty$-category with a final object and
$\mathcal{D}$ a Goodwillie-differentiable $\infty$-category. Then, for
each $n \ge 0$, the inclusion functor
\[
\Exc^n(\mathcal{C},\, \mathcal{D}) \to \Fun(\mathcal{C},\,\mathcal{D})
\]
admits a left adjoint $P^n:\Fun(\mathcal{C},\,\mathcal{D}) \to
\Exc^n(\mathcal{C},\, \mathcal{D}) $, which is left exact. \qed
\end{theorem}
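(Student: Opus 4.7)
The plan is to follow Goodwillie's classical strategy, adapted to $\infty$-categories: construct $P^n$ explicitly as a sequential colimit of an iterated endofunctor $T_n$ on $\Fun(\mathcal{C}, \mathcal{D})$, then verify the universal property and left exactness directly. For each $X \in \mathcal{C}$ I would associate a strongly coCartesian $(n+1)$-cube $\mathcal{X}_X : N(\mathbf{P}([n])) \to \mathcal{C}$ with initial vertex $\mathcal{X}_X(\emptyset) = X$ and $\mathcal{X}_X(\{i\}) = \ast$ for each $i \in [n]$, obtained as a left Kan extension from $\mathrm{Cube}^{n+1}_{\le 1}(\mathcal{C})$ using the final object of $\mathcal{C}$. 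Composing with $F$ and taking the limit over the punctured cube $N(\mathbf{P}_{\ge 1}([n]))$ gives
\[
T_n F(X) := \varprojlim_{\emptyset \neq S \subseteq [n]} F(\mathcal{X}_X(S)),
\]
equipped with a canonical natural transformation $t_n(F) : F \to T_n F$ induced by the vertex $\emptyset$. This defines an endofunctor $T_n$ of $\Fun(\mathcal{C}, \mathcal{D})$ together with a natural transformation $t_n : \mathrm{id} \to T_n$, and unwinding the definitions shows that $F$ is $n$-excisive if and only if $t_n(F)$ is a natural equivalence.

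I would then define
\[
P^n F := \varinjlim\bigl(F \xrightarrow{t_n(F)} T_n F \xrightarrow{t_n(T_n F)} T_n^2 F \to \cdots \bigr)
\]
inside $\Fun(\mathcal{C}, \mathcal{D})$. The main obstacle is showing that $P^n F$ is genuinely $n$-excisive, since $n$-excisiveness demands preservation of certain finite limit diagrams while $P^n F$ is manufactured by a sequential colimit. The argument proceeds in two moves. First, for an arbitrary strongly coCartesian $(n+1)$-cube $Y$ in $\mathcal{C}$ one compares the cube $T_n F \circ Y$ with the limit formula defining $T_n F$ on $Y(\emptyset)$; a rearrangement of iterated cubes exhibits $T_n F$ as strictly closer to excisive than $F$. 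Second, passing to the colimit in $k$, the residual failure of each $T_n^k F$ to convert $Y$ into a Cartesian cube becomes a telescope of maps whose colimit is forced to be an equivalence. This step invokes the Goodwillie-differentiability of $\mathcal{D}$ in an essential way, as it requires commuting the sequential colimit defining $P^n F$ past the finite limits that compute both $P^n F(Y(\emptyset))$ and $\varprojlim_{S \neq \emptyset} P^n F(Y(S))$. This convergence argument is the technical heart of the proof.

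The universal property then follows formally. If $G$ is $n$-excisive then $t_n(G)$ is an equivalence by construction, hence so is every iterate $T_n^k G \to T_n^{k+1} G$, and therefore the canonical map $G \to P^n G$ is an equivalence. For any natural transformation $F \to G$ with $G$ $n$-excisive, the induced map $P^n F \to P^n G \simeq G$ provides the required factorization of $F \to G$ through the unit $F \to P^n F$, unique up to contractible choices. This exhibits $P^n$ as left adjoint to the inclusion $\Exc^n(\mathcal{C}, \mathcal{D}) \hookrightarrow \Fun(\mathcal{C}, \mathcal{D})$.

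Finally, for left exactness of $P^n$: the endofunctor $T_n$ is itself left exact, because its value at $F$ is pointwise a finite limit of evaluations of $F$ and finite limits in $\Fun(\mathcal{C}, \mathcal{D})$ are computed pointwise. Hence every iterate $T_n^k$ is left exact. Since $P^n$ is the sequential colimit of these left-exact functors along the natural transformations $t_n(T_n^k F)$, a second application of Goodwillie-differentiability — sequential colimits in $\mathcal{D}$ commute with finite limits — yields that $P^n$ also preserves finite limits.
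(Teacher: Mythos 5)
The paper does not prove this result at all; it is quoted verbatim from Lurie~\cite[Theorem 6.1.1.10]{HA}, and the \verb|\qed| marks it as a cited fact. So there is no ``paper proof'' to compare against. Your sketch reconstructs the standard argument of Goodwillie, as formalized in \cite[\S 6.1.1]{HA}: define the auxiliary endofunctor $T_n$ from the strongly coCartesian cubes $\mathcal{X}_X$ (Lurie's Construction 6.1.1.27), take $P^n F = \varinjlim_k T_n^k F$, prove $n$-excisiveness via the cube-rearrangement lemma plus the hypothesis that sequential colimits in $\mathcal{D}$ commute with finite limits, then read off the universal property and left exactness. That is the correct route, and the left-exactness paragraph is exactly right.

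One claim in the sketch is off and, if believed, would make the rest of your argument unnecessary: you assert that ``unwinding the definitions shows that $F$ is $n$-excisive if and only if $t_n(F)$ is a natural equivalence.'' Only the forward implication is a matter of unwinding: an $n$-excisive $F$ sends the specific strongly coCartesian cube $\mathcal{X}_X$ to a Cartesian cube, so $t_n(F)$ is an equivalence. The converse is \emph{not} immediate, because $t_n(F)$ tests $F$ only on the very special cubes $\mathcal{X}_X$ whose one-element vertices are the final object, while $n$-excisiveness quantifies over all strongly coCartesian $(n+1)$-cubes. If the converse were available by inspection, the ``main obstacle'' you identify in the next paragraph (showing $P^n F$ is genuinely $n$-excisive) would evaporate, since $t_n(P^n F)$ is an equivalence essentially by construction of the colimit. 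In fact the converse does hold, but only \emph{as a consequence} of the theorem you are trying to prove (if $t_n(F)$ is an equivalence then $F \simeq P^n F$, which is $n$-excisive); stating it up front as a definitional unwinding inverts the logical order. The correct statement to record at that point is the one-directional Remark 6.1.1.28 of \cite{HA}, and the bidirectional statement should be postponed until after the convergence argument. With that reordering your outline is a faithful summary of the cited proof.
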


We recall the explicit construction of the $n$-excisive approximation $P^n(F)$ for a functor $F: \mathcal{C} \to \mathcal{D}$. 
For any finite subset $S$ of $[n]$, the $S$-pointed cone functor $C_S$ is the composition 
\[
 \mathcal{C} \simeq  \mathrm{Cube}^S_{\le 0}(\mathcal{C})  \overset{i_{0!}}{\to} \mathrm{Cube}^S_{\le 1}(\mathcal{C}) \overset{i_{\le 1 *}}{\to} \mathrm{Cube}^S(\mathcal{C}) 
 \overset{\mathrm{Ev}(S)}{\to} \mathcal{C}, 
\]
where $i_{0!}$ denotes the right Kan extension along the inclusion $i_0: \mathbf{P}_{\le 0}(S)  \to \mathbf{P}(S)$, $i_{\le 1 *}$ denotes the left Kan extension of $ \mathbf{P}_{\le 0}(S)  \to \mathbf{P}(S)$, and $\mathrm{Ev}(S)$ is the evaluation at the terminal vertex of cubes. Note that $C_\emptyset$ is the identity and $C_S$ is the final object of $\mathcal{C}$ whenever $|S|=1$.  
The functor $T_n(F): \mathcal{C} \to \mathcal{D}$ is defined as follows: 
\[
 T_n(F)(X) = \varprojlim_{\emptyset \neq S \subset [n]} F(C_S(X)) 
\]
for any $X \in \mathcal{C}$. The homotopy colimit of this sequence defines the $n$-excisive approximation $P^n$:
\[
    T_n(F) \to T_n(T_n(F)) \to T^3_n(F) \to \cdots \to T^k_n(F) \to \cdots 
\]
By the definition of $T_n$, if $F$ is $n$-excisive, the canonical map $\theta_F : F \to T_n(F)$ is a weak equivalence. Therefore, the colimit map $F \to P^n(F)$ is also a weak equivalence. The universal property in Theorem~\ref{HATheorem6.1.1.10} follows from the following lemmas.

\begin{lemma}[Rezk \cite{HA} p.1021, Lemma 6.1.1.26]
\label{Exc-lemma} Let $X: \mathbf{P}(S) \to \mathcal{C}$ be an $S$-cube and $F:\mathcal{C} \to \mathcal{D}$ be a functor. For any $I \subset S$, we define a cube $X_I : \mathbf{P}(S) \to \mathcal{C}$ by
\[
 X_I:    S' \mapsto   X(I \cup S') 
\]
Then, for any $\emptyset \neq  S' \in \mathbf{P}(S)$, the cube
\[
 F(X_\blacksquare(S')) :  I  \mapsto F(X_I(S'))     
\]
and the limit \[
Y_\blacksquare = \varprojlim_{S' \neq \emptyset}  F(X_\blacksquare(S'))  
\]
of cubes is Cartesian. Furthermore, if $X$ is strongly coCartesian, the map $\theta_F(X) : F(X) \to T_n(F)(X)$ factors through $Y_\blacksquare$. 
\end{lemma}
\begin{proof}
For any set $S' \subset S$, let $\mathbf{P}_{S'}(S)$ denote the set of subsets of $S$ that contain $S'$.  Then the projection $\pi_{S'} : \mathbf{P}(S) \to \mathbf{P}_{S'}(S)$ is defined by $\pi_{S'}(I)= I \cup S'$. The cube $F(X_\blacksquare(S'))$ is the inverse image of $\pi_{S'}$ of the restriction $F(X)|_{\mathbf{P}_{S'}(S)}: \mathbf{P}_{S'}(S) \to \mathcal{D}$. Therefore, the homotopy limit $\varprojlim_{I \neq \emptyset }F(X_I(S'))$ is weakly equivalent to the homotopy limit of $F(X)|_{\mathbf{P}_{S'}(S)}: \mathbf{P}_{S'}(S) \to \mathcal{D}$, which is $F(X_{S'}(S'))=F(X(S'))$. From $F(X_\emptyset(S'))=F(X(S'))$, we obtain that $F(X_\blacksquare(S'))$ is Cartesian, whenever $S'$ is nonempty, and the limit $Y_\blacksquare$ of Cartesian cubes is also Cartesian. 

The identity $F(X)|_{\mathbf{P}_{S'}(S)} \to F(X)|_{\mathbf{P}_{S'}(S)}  $ induces the unit morphism $  F(X) \to F(X_\blacksquare(S'))$, and $F(X) \to F(X_\blacksquare(\emptyset))$ is the identity. Therefore, there is a canonical map $F(X) \to Y_\blacksquare$ of cubes. We assume that $X$ is strongly coCartesian. Note that $X_I|_{\mathbf{P}_{\le 1}(S)}$ is the left Kan extension of the restriction $X|_{\mathbf{P}_{\le 1}(S \setminus I) }$ along 
\[
     i_I:  \mathbf{P}_{\le 1}(S \setminus I)  \to     \mathbf{P}_{\le 1}(S  ),   \quad i_I (S') = S' \cup I. 
\]
Hence, $X_I$ is strongly coCartesian, and for any $I \subset S$, the identity morphism $X_I(\emptyset) \to X(I)$ induces a map $X_I \to C_\blacksquare(X(I))$ of cubes, which is functorial for $I$. The induced map of limits $ \varprojlim_{S' \neq \emptyset} F(X_I(S') ) \to \varprojlim_{S' \neq \emptyset}F(C_{S'}(X(I)))$ is functorial for $I$, and we obtain the homotopically commutative square 
\[
\xymatrix@1{  F(X_\blacksquare(\emptyset)) \ar[d]  \ar[r]^\simeq & F(C_\emptyset(X)) \ar[d] \\
Y_\blacksquare \ar[r] & T_n(F)(X)
}
\]
of $S$-cubes, which implies

the desired property.  \qed
\end{proof}

\begin{lemma}[\cite{HA} p.1022, Lemma 6.1.1.33]
\label{n-app-lemma}
Let $\mathcal{C}$ be an $\infty$-category admitting finite colimits and a final object, $\mathcal{D}$ a Goodwillie-differentiable $\infty$-category, and $F: \mathcal{C} \to \mathcal{D}$ a functor. Then the $n$-excisive approximation $P^n(F): \mathcal{C} \to \mathcal{D}$ is $n$-excisive.
\end{lemma}
\begin{proof}
Let $X$ be a strongly coCartesian $n$-cube. By Lemma~\ref{Exc-lemma}, the
canonical map
\[
\theta_{T_n^{k-1}(F)}(X): T_n^{k-1}(F)(X) \to T_n^k(F)(X)
\]
factors through a Cartesian $n$-cube $Y_\blacksquare^k$. Therefore,
\[
P^n(F)(X) \simeq \varinjlim_k Y_\blacksquare^k.
\]
Since finite limits commute with filtered colimits,
$\varinjlim_k Y_\blacksquare^k$ is Cartesian. Hence $P^n(F)(X)$ is
Cartesian. \qed
\end{proof}
\begin{lemma}[\cite{HA}, p.1022, Lemma 6.1.1.34]
\label{LP}
Let $\mathcal{C}$ be an $\infty$-category which admits finite colimits and has a final object, and $\mathcal{D}$ a Goodwillie-differentiable $\infty$-category. 
Given a functor $F : \mathcal{C} \to \mathcal{D}$, the canonical map
$\theta:F \to T_n(F)$ induces an equivalence $P^n(F) \to P^n(T_n(F))$.
\qed
\end{lemma}
\begin{corollary}[\cite{HA}, p.1022, Lemma 6.1.1.35]
\label{TP}
The canonical map $P^n(F) \to P^n(P^n(F))$ is a weak equivalence.    \qed  
\end{corollary}

Let $\Fun(\mathcal{C},\,\mathcal{D})[T_n^{-1}]$ denote the localization of $\Fun(\mathcal{C},\,\mathcal{D})$ by the family of morphisms $\theta_F :F \to T_n(F)$. Lemma~\ref{LP} implies that the functor $P^n : \Fun(\mathcal{C},\,\mathcal{D}) \to \Exc^n(\mathcal{C},\,\mathcal{D})$ factors through $\Fun(\mathcal{C},\,\mathcal{D})[T_n^{-1}]$. Any object $F$ of $\Exc^n(\mathcal{C},\,\mathcal{D})$ satisfies $F \simeq T_n(F) \simeq P^n(F)$. Therefore, the $n$-excisive approximation induces a categorical equivalence
\[
P^n : \Fun(\mathcal{C},\, \mathcal{D})[T_n^{-1}] \to \Exc^n(\mathcal{C},\,\mathcal{D})
\]
of $\infty$-categories. Thus, we can identify the $n$-excisive approximation $P^n$ with the localization functor $\Fun(\mathcal{C}, \, \mathcal{D}) \to \Fun(\mathcal{C},\,\mathcal{D})[T_n^{-1}]$.

\section{Higher excisive approximations of \texorpdfstring{$\infty$-categories}{infinity-categories}}
\label{sec:n-excisive} In this section, using the $\infty$-categorical
Yoneda lemma, we define $n$-excisive objects of
$\infty$-categories. First, in Section~\ref{sec:Yoneda}, we recall the
$\infty$-categorical Yoneda lemma. Next, in Section~\ref{sec:app}, we
define $n$-excisive objects and prove their universal property. Finally,
in Section~\ref{sec:excisive}, we note that locally presentable excisive
$\infty$-categories are stable.
\subsection{The \texorpdfstring{$\infty$-categorical Yoneda lemma and $n$-excisive $\infty$-categories}{infinity-categorical Yoneda lemma and n-excisive infinity-categories}}
\label{sec:Yoneda}
\begin{lemma}[The \texorpdfstring{$\infty$}{infinity}-categorical Yoneda lemma~\cite{HT} p.317,
Proposition 5.1.3.1] \label{presheaves} Let $\mathcal{C}$ be a small
$\infty$-category. Then the functor
\begin{align*}
 \mathbb{Y}_\mathcal{C}: \mathcal{C} &\to \Fun(\mathcal{C}^{\rm op},\, \mathcal{S})\\
  X    &\mapsto \Hom_\mathcal{C}(-,\,X)    
\end{align*}
is fully faithful. \qed 
\end{lemma}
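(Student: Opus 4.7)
The plan is to reduce fully faithfulness to the so-called \emph{one-sided Yoneda lemma} and then verify that via unstraightening. Concretely, fully faithfulness means that for every pair $X,Y \in \mathcal{C}$, the map
\[
\Hom_\mathcal{C}(X,Y) \longrightarrow \Hom_{\Fun(\mathcal{C}^{\rm op},\, \mathcal{S})}\bigl(\mathbb{Y}_\mathcal{C}(X),\, \mathbb{Y}_\mathcal{C}(Y)\bigr)
\]
induced by $\mathbb{Y}_\mathcal{C}$ is a weak equivalence in $\mathcal{S}$. First I would make the Yoneda functor precise by producing the mapping-space bifunctor $\Hom_\mathcal{C}(-,-): \mathcal{C}^{\rm op}\times \mathcal{C} \to \mathcal{S}$ from the twisted-arrow category (or, equivalently, via a fibrant simplicial category presentation of $\mathcal{C}$) and currying one variable; this automatically identifies $\mathbb{Y}_\mathcal{C}(X)(Y)$ with $\Hom_\mathcal{C}(Y,X)$.

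The key step is to establish the one-sided form: for every presheaf $F \in \Fun(\mathcal{C}^{\rm op},\mathcal{S})$ and every $X \in \mathcal{C}$, evaluation at $\mathrm{id}_X$ yields a natural equivalence
\[
\mathrm{ev}_X:\Hom_{\Fun(\mathcal{C}^{\rm op},\,\mathcal{S})}\bigl(\mathbb{Y}_\mathcal{C}(X),\, F\bigr) \xrightarrow{\ \simeq\ } F(X).
\]
Applying this with $F = \mathbb{Y}_\mathcal{C}(Y)$ gives $\Hom(\mathbb{Y}_\mathcal{C}(X), \mathbb{Y}_\mathcal{C}(Y)) \simeq \Hom_\mathcal{C}(X,Y)$, and tracing the construction back through the curry/uncurry adjunction shows that the map induced by $\mathbb{Y}_\mathcal{C}$ is a one-sided inverse, hence an equivalence.

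To prove the one-sided form, I would pass through the unstraightening equivalence: presheaves $\mathcal{C}^{\rm op} \to \mathcal{S}$ correspond to right fibrations over $\mathcal{C}$, and the presheaf $\mathbb{Y}_\mathcal{C}(X)$ corresponds to the slice right fibration $\mathcal{C}_{/X} \to \mathcal{C}$. Mapping spaces between presheaves translate into spaces of maps of right fibrations over $\mathcal{C}$, which in turn are spaces of sections over $\mathcal{C}_{/X}$ of the pullback of the right fibration classifying $F$. Because $\mathrm{id}_X \in \mathcal{C}_{/X}$ is a terminal object, any such section is determined up to contractible ambiguity by its value at $\mathrm{id}_X$, and this value lives precisely in $F(X)$. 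This gives the desired equivalence.

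The main obstacle is bookkeeping rather than conceptual: rigorously handling joins, slice $\infty$-categories, and the compatibility of straightening/unstraightening with mapping spaces all require some care. I would freely cite the relevant results from \cite{HT} (in particular Section 2.2, Section 4.2, and the compatibility of mapping spaces with right fibrations) and concentrate the original verification on the compatibility between the abstract $\mathrm{ev}_X$ constructed above and the map induced by $\mathbb{Y}_\mathcal{C}$, which reduces to an application of naturality in $X$ applied to $\mathrm{id}_X$.
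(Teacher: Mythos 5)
The paper does not prove this lemma at all: it is quoted verbatim from Lurie with the \texttt{\textbackslash qed} serving to close the cited statement, so the ``paper's own proof'' is simply the reference to \cite[Proposition 5.1.3.1]{HT}. Your sketch is a correct and well-known route, but it is genuinely different from the one in the cited source. Lurie's proof of Proposition 5.1.3.1 works directly with a fibrant simplicial-category model of $\mathcal{C}$ and verifies by an explicit homotopy that the map on mapping spaces is a weak equivalence; the one-sided statement that evaluation at $\mathrm{id}_X$ induces $\Hom(\mathbb{Y}_\mathcal{C}(X),F)\simeq F(X)$ appears later in \cite{HT} (Lemma 5.5.2.1) and is deduced using 5.1.3.1. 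You invert that logical order: you prove the one-sided form first, via unstraightening, identifying $\mathbb{Y}_\mathcal{C}(X)$ with the right fibration $\mathcal{C}_{/X}\to\mathcal{C}$ and using that $\mathrm{id}_X$ is terminal in $\mathcal{C}_{/X}$ so that sections of a right fibration over $\mathcal{C}_{/X}$ are detected on the fiber over $\mathrm{id}_X$; fully faithfulness then drops out by specializing $F=\mathbb{Y}_\mathcal{C}(Y)$ and checking that $\mathrm{ev}_X$ is a retraction of the map induced by $\mathbb{Y}_\mathcal{C}$. Both approaches are valid; yours is more structural and reuses the straightening/unstraightening machinery rather than an ad hoc simplicial computation, at the cost of needing the nontrivial identification of $\mathbb{Y}_\mathcal{C}(X)$ with the slice fibration (essentially \cite[Prop.\ 5.1.3.2 or 4.4.4.5]{HT}), so one must be a little careful not to smuggle in a consequence of the very lemma being proved. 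As you note, that identification and the compatibility of $\mathrm{ev}_X$ with the map induced by $\mathbb{Y}_\mathcal{C}$ are the places where the real work sits; your outline of how to discharge them (right-anodyne inclusion of the terminal vertex, naturality applied to $\mathrm{id}_X$) is correct.
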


We write $\mathrm{Pre}(\mathcal{C}) =\Fun(\mathcal{C}^{\rm op},\,
\mathcal{S})$ for the $\infty$-category of {\it presheaves} on
$\mathcal{C}$, and the fully faithful functor $\mathbb{Y}$ is called the
{\it Yoneda functor (or Yoneda embedding)}. Dually, the functor
\begin{align*}
 \mathbb{Y}^\vee_\mathcal{C}: \mathcal{C} &\to \Fun(\mathcal{C},\, \mathcal{S})\\
  X    &\mapsto \Hom_\mathcal{C}(X,\,-)    
\end{align*}
is also fully faithful, which is called the coYoneda functor. 

\begin{definition}
\label{cpt}
An $\infty$-category $\mathcal{C}$ is {\it compactly generated} if $\mathcal{C}$ satisfies the following:
\begin{itemize}
    \item[(1)] There exists a small set $S$ of objects such that, for each $A \in S$, the functor $\Hom_\mathcal{C}(A,\, - ): \mathcal{C} \to \mathcal{S}$ preserves all filtered colimits.
    \item[(2)] A morphism $f:X \to Y$ is a weak equivalence if and only if $\Hom_\mathcal{C}(A,\,f): \Hom_\mathcal{C}(A,\, X) \to \Hom_\mathcal{C}(A,\,Y)$ is a weak equivalence for each $A \in S$. 
\end{itemize}
\end{definition}
We say that such $S$ in Definition~\ref{cpt} is a generator of
$\mathcal{C}$. For any $\infty$-category $\mathcal{C}$, $\mathcal{C}_0$
denotes the full subcategory spanned by compact objects.  By the
definition, if $\mathcal{C}$ is compactly generated, the $\mathcal{C}_0
\to \mathcal{C}$ induces a categorical equivalence
$\mathrm{Ind}(\mathcal{C}_0) \to \mathcal{C}$, where
$\mathrm{Ind}(\mathcal{C}_0)$ denotes the ind-category whose objects are
filtered colimits of objects of $\mathcal{C}_0$. Along the Yoneda embedding
$\mathbb{Y}_{\mathcal{C}_0} : \mathcal{C}_0 \to
\mathrm{Pre}(\mathcal{C}_0)$, the ind-category
$\mathrm{Ind}(\mathcal{C}_0)$ is identified with the full subcategory of
$\mathrm{Pre}(\mathcal{C})$ spanned by left exact presheaves.

An $\infty$-category that is both locally presentable and compactly generated is called {\it locally finitely presentable}.  

\subsection{The definition of higher excisive objects}
\label{sec:app} Let $F: \mathcal{C} \to \mathcal{D}$ be a functor
between pointed $\infty$-categories. The induced functor
$F_*:\Fun(\mathcal{C}^{\rm op},\, \mathcal{S} ) \to
\Fun(\mathcal{D}^{\rm op},\, \mathcal{S} )$ is the left Kan
extension of $\mathbb{Y}_\mathcal{D} \circ F: \mathcal{C} \to
\Fun(\mathcal{D}^{\rm op},\, \mathcal{S} )$ along
$\mathbb{Y}_\mathcal{C}:\mathcal{C} \to \Fun(\mathcal{C}^{\rm op},\,
\mathcal{S} )$. This correspondence gives an equivalence of
$\infty$-categories
\[
  \Fun^{\rm L}\left( \mathrm{Pre}(\mathcal{C}),\,  
\mathrm{Pre}(\mathcal{D}) \right) 
\to \Fun \left( \mathcal{C},\, \mathrm{Pre}(\mathcal{D})  \right), 
\]
where $\Fun^{\rm L}\left( \mathrm{Pre}(\mathcal{C}),\,
\mathrm{Pre}(\mathcal{D}) \right)$ denotes the $\infty$-category of
functors admitting a right adjoint. (See \cite[p.324, Theorem
5.1.5.6]{HT}). Furthermore, if $F: \mathcal{C} \to \mathcal{D}$ is a
functor between compactly generated $\infty$-categories sending compact
objects to compact objects, $F$ is the left Kan extension of its
restriction to the full subcategory $\mathcal{C}_0$ spanned by compact
objects, and admits a right adjoint $F^*: \mathcal{D} \to \mathcal{C}$,
which is the restriction of the inverse image functor
$F^{-1}:\mathrm{Pre}(\mathcal{D}_0) \to \mathrm{Pre}(\mathcal{C}_0)$.

\begin{definition}
Let $\mathcal{C}$ be a small $\infty$-category with finite colimits and
a final object. For any $n \ge 0$, we say that $X$ is an {$n$-excisive
object} if the coYoneda functor $\mathbb{Y}^\vee(X)$ is $n$-excisive.
Let $P^n(\mathcal{C})$ denote the Cartesian product
\[
   \mathrm{Exc}^n ( \mathcal{C},\, \mathcal{S}) 
\times_{ \Fun ( \mathcal{C},\, \mathcal{S})  }  \mathcal{C}
\]
along the coYoneda functor $\mathbb{Y}^\vee: \mathcal{C} \to \Fun(
\mathcal{C},\, \mathcal{S} )$. That is, the $\infty$-category $P^n
(\mathcal{C})$ is the full subcategory of $\mathcal{C}$ spanned by
$n$-excisive objects.
\end{definition}
Since any corepresentable functor is left exact, the coYoneda embedding $\mathbb{Y}^\vee_\mathcal{C} : \mathcal{C} \to \Fun(\mathcal{C},\,\mathcal{S})$ factors through the full subcategory $\Fun^{\mathrm{lex}}(\mathcal{C},\,\mathcal{S})$ spanned by left exact functors. Hence, $P^n (\mathcal{C})$ coincides with 
\[ 
\mathrm{Exc}^{n \mathrm{lex}} ( \mathcal{C},\, \mathcal{S}) 
\times_{ \Fun^\mathrm{lex} ( \mathcal{C},\, \mathcal{S})  }  \mathcal{C},
\]
where $\mathrm{Exc}^{n \mathrm{lex}} ( \mathcal{C},\, \mathcal{S})$ is the full subcategory spanned by left exact $n$-excisive functors.

The Yoneda functor $\mathbb{Y}_\mathcal{C}$ preserves all small limits. By the definition of $T_n$, one has a weak equivalence:
\[
T_n(\mathbb{Y}_\mathcal{C})(X) = \varprojlim_{\emptyset \neq S} \mathbb{Y}_\mathcal{C}(C_S(X)) \simeq \mathbb{Y}_\mathcal{C}( \varprojlim_{\emptyset \neq S}C_S (X)     )= \mathbb{Y}_\mathcal{C}(T_n (\mathrm{id}_\mathcal{C})(X)),   
\]
implying that the natural map $T_n \circ \mathbb{Y}_\mathcal{C} \to \mathbb{Y}_\mathcal{C} \circ T_n$ is a weak equivalence.
Furthermore, if $\mathcal{C}$ is compactly generated, the homotopy colimit map $P^n \circ \mathbb{Y}_\mathcal{C} \to \mathbb{Y}_\mathcal{C} \circ P^n$ is also a weak equivalence.

\begin{proposition}
\label{n-Excisive} Let $\mathcal{C}$ be a pointed small
$\infty$-category. For any $n \ge 0$, the following conditions are
equivalent:
\begin{itemize}
 \item[(1)] The fully faithful functor $P^n(\mathcal{C}) \to \mathcal{C}$ is
       a categorical equivalence.
 \item[(2)] The identity functor $\mathrm{id}_\mathcal{C}$ is $n$-excisive.
 \item[(3)]    The Yoneda functor $\mathbb{Y}_\mathcal{C}: \mathcal{C} \to \Fun(
       \mathcal{C}^{\rm op},\, \mathcal{S})$ is $n$-excisive.
 \item[(4)] Further, if $\mathcal{C}$ is compactly generated, the identity on $\mathcal{C}_0$ is $n$-excisive, where $\mathcal{C}_0$  denotes the full subcategory spanned by compact objects. 
     \end{itemize}
\end{proposition}
\begin{proof}
Let $Y$ be a strongly coCartesian $n$-cube. Since $\Hom_\mathcal{C}(X,\,-)$
commutes with all small limits, the $\infty$-categorical Yoneda lemma
implies that conditions (1) and (2) are equivalent to the statement that
$Y(\emptyset) \to \varprojlim_{\emptyset \neq S} Y(S)$ is a weak equivalence.
Hence $Y$ is also Cartesian. Since the Yoneda functor
$\mathbb{Y}_\mathcal{C}$ preserves all small limits, (2) and (3) are
equivalent.

We assume that $\mathcal{C}$ is compactly generated. To prove implication (4)$\Rightarrow$(2), it suffices to verify that $\mathrm{id}_\mathcal{C} \to P^n(\mathrm{id}_\mathcal{C}): \mathcal{C} \to \mathcal{C}$ is a weak equivalence. Any object $X$ of $\mathcal{C}$ is homotopically equivalent to a filtered colimit of compact objects. Since filtered colimits commute with finite limits and small colimits, Condition (4) implies Condition (2). \qed   
\end{proof}

For any object $X$ of the essential image of $P^n(\mathrm{id}_\mathcal{C}): \mathcal{C} \to \mathcal{C}$, the induced map $X \to T_n(X)$ is also a weak equivalence. 
Therefore, the essential image of $P^n(\mathrm{id}_\mathcal{C})$ can be identified with the localization $\mathcal{C}[T_n^{-1}]$.        

\begin{corollary}
\label{P-id}
Let $\mathcal{C}$ be a Goodwillie differentiable $\infty$-category that admits filtered colimits and a final object. The $n$-excisive approximation $P^n(\mathrm{id}_\mathcal{C})$ of the identity functor factors through $P^n(\mathcal{C})$, and the essential image of $P^n(\mathrm{id}_\mathcal{C})$ is categorically equivalent to $P^n(\mathcal{C})$. 
\end{corollary}
\begin{proof}
By Proposition~\ref{n-Excisive}, the embedding $P^n(\mathcal{C}[T_n^{-1}]) \to \mathcal{C}[T_n^{-1}]$ is a categorical equivalence. Hence $\mathcal{C}[T_n^{-1}]$ is contained in the full subcategory $P^n(\mathcal{C})$. \qed   
\end{proof}

\begin{theorem}
Let $\mathcal{C}$ be a Goodwillie differentiable $\infty$-category that admits filtered colimits and a final object, and $\mathcal{C}^\omega$ denote the full subcategory spanned by compact objects. 
\end{theorem}

\begin{proposition}
\label{adjoint} Let $\mathcal{C}$ and $\mathcal{D}$ be pointed
$\infty$-categories admitting finite limits and colimits, and $F:
\mathcal{C} \to \mathcal{D}$ a pointed functor admitting a right adjoint $F^*$. 
If $\mathcal{D}$ is
$n$-excisive, both $F:\mathcal{C} \to \mathcal{D}$ and $
F^*: \mathcal{D} \to \mathcal{C}$ are $n$-excisive.
\end{proposition}
\begin{proof}
Let $X_\bullet$ be a strongly coCartesian $n$-cube of $\mathcal{C}$. Since $F$ preserves all small colimits, $F(X_\bullet)$ is also strongly coCartesian. Therefore, by Lemma~\ref{n-Excisive}, $F(X_\bullet)$ is Cartesian. 

Let $Y_\bullet$ be a strongly coCartesian $n$-cube. Then $Y_\bullet$ is also Cartesian. Since the right adjoint preserves all small limits, $F^*(Y_\bullet)$ is Cartesian. 
\qed 
\end{proof}
A restriction of an $n$-excisive functor is also $n$-excisive.  
\begin{corollary}
Let $\mathcal{C}$ and $\mathcal{D}$ be locally finitely presentable $\infty$-categories, and let $F: \mathcal{C} \to \mathcal{D}$ be a pointed functor preserving compact objects. If $\mathcal{D}$ is $n$-excisive, $F$ and its right adjoint are also $n$-excisive.  \qed      
\end{corollary}

For any pointed $\infty$-category $\mathcal{C}$ admitting finite
colimits, the $n$-excisive approximation $P^n(\mathcal{C})$ has the following universal property:
\begin{theorem}
\label{Universality} Let $\mathcal{C}$ and $\mathcal{D}$ be pointed
$\infty$-categories admitting finite limits and colimits. If $\mathcal{D}$ is
$n$-excisive, then any pointed functor $F:\mathcal{C} \to \mathcal{D}$
admitting a right adjoint $F^*: \mathcal{D} \to \mathcal{C}$ factors through the $n$-excisive localization $\mathcal{C}[T_n^{-1}]$
as
\[
F: \mathcal{C} \overset{P^n(\mathrm{id}_\mathcal{C})}{\to} \mathcal{C}[T_n^{-1}] \to \mathcal{D}.
\]
\end{theorem}
\begin{proof}
It is sufficient to prove that the natural transformation $\mathrm{id}
\to P^n$ induces a weak equivalence
\[
  (F \circ \mathrm{id}_{\mathcal{C}} \to   F \circ P^n( \mathrm{id}_{\mathcal{C}} ) )  :   \mathcal{C} \to \mathcal{D}.
\] 
Since the restriction $F: \mathcal{C}
\to \mathcal{D} $ preserves all small colimits and final objects, one has
\[
  P^n(F^{*} \circ F  ) \simeq P^n(F^*) \circ F   \simeq  F^{*} \circ F  
\]  
by Proposition~\ref{adjoint}, which implies that the endofunctor $F^{*} \circ F:
\mathcal{C} \to \mathcal{C} $ is
$n$-excisive.  Therefore, the unit
\[
u:\mathrm{id}_{\mathcal{C}} \to F^{*} \circ F 
\] of the
induced adjunction $ F : \mathcal{C} \rightleftarrows
\mathcal{D} : F^{*}$ factors through the $n$-excisive
approximation $P^n(\mathrm{id}_{\mathcal{C}})$, yielding the induced natural transformation
$F(\mathrm{id}_{\mathcal{C}}) \to
F(P^n(\mathrm{id}_{\mathcal{C}}))$
which is homotopically split. Since the $n$-excisive approximation $P^n$ is homotopically
idempotent, the natural transformation
$F(P^n(\mathrm{id}_{\mathcal{C}})) \to
F(P^n(P^n(\mathrm{id}_{\mathcal{C}})))$ is a weak
equivalence, implying that its retract
$F(\mathrm{id}_{\mathcal{C}}) \to
F(P^n(\mathrm{id}_{\mathcal{C}}))$ is also a weak equivalence.  \qed
\end{proof}
By Lurie~\cite[p.324, Theorem 5.1.5.6]{HT}, we have the following:     
\begin{corollary}
\label{Universal-cpt}
Let $\mathcal{C}$ and $\mathcal{D}$ be pointed locally finitely presentable $\infty$-categories, and let $\mathcal{C}_0$ denote the full subcategory spanned by compact objects. Assume that $\mathcal{D}$ is $n$-excisive. Then any pointed functor $F_0:\mathcal{C}_0 \to \mathcal{D}$ admits a left Kan extension $F: \mathcal{C} \to \mathcal{D}$, which factors through the $n$-excisive localization $\mathcal{C}[T_n^{-1}]$ as in Theorem~\ref{Universality}. \qed
\end{corollary}

\begin{remark}
Let $\mathcal{C}$ be a locally finitely presentable
$\infty$-category and $\mathcal{D}$ an $n$-excisive
$\infty$-category. Let $\Fun^{\rm L}_\omega(\mathcal{C},\,\mathcal{D})$ denote the $\infty$-category of functors admitting a right adjoint and preserving compact objects. Then $P^n: \Fun^{\rm L}_\omega( \mathcal{C},\,
\mathcal{C} ) \to \Exc^{\mathrm{L},\,n}_\omega(
\mathcal{C},\, \mathcal{C} )$ is a weak
$n$-excisive approximation in the sense of Heuts~\cite[Definition
1.2]{Heuts2018}, where $\Exc^{\mathrm{L},\,n}_\omega(
\mathcal{C},\, \mathcal{C})$ denotes the
full subcategory of $\Fun^{\rm L}( \mathcal{C},\,
\mathcal{C} ) $ spanned by $n$-excisive functors. Note
that $\Exc^{\mathrm{L},\,n}( \mathcal{C},\,
\mathcal{C} )$ is presentable. For any colimit preserving
and pointed functor $f:\mathcal{C} \to
\mathcal{C}$, one has a chain of weak equivalences $P^n(f)
\to P^n( \mathrm{id}_{\mathcal{C}} \circ f) \to P^n(
\mathrm{id}_{\mathcal{C}}) \circ f$ by Lurie~\cite[p.~1021,
Remark 3.1.1.30]{HA}. In particular,
$P^n(\mathrm{id}_{\mathcal{C}}) $ is homotopically
idempotent, implying that $f$ is $n$-excisive if and only if $f \to
P^n(\mathrm{id}_{\mathcal{C}}) \circ f $ is
a weak equivalence. Equivalently, the essential image of
$P^n(\mathrm{id}_{\mathcal{C}})$ exhibits the $n$-excisive
approximation $P^n(\mathcal{C})$ in the equalizer:
\[
\xymatrix@1{
     \Exc^{\mathrm{L},\,n}(
\mathcal{C},\, \mathcal{C} )  \ar[r] &    \Fun^{\mathrm{L}}(
\mathcal{C},\, \mathcal{C} )   \ar[rr]<0.5mm>^{P^n(\mathrm{id}_{\mathcal{C}}) }  \ar[rr]<-0.5mm>_{\mathrm{id}_{\mathcal{C}}} & &  \Fun^{\mathrm{L}}(
\mathcal{C},\, \mathcal{C} ).
}
\]
\end{remark}

\begin{proposition}
Let $\mathcal{C}$ be a locally finitely presentable $\infty$-category admitting a final object, and $\mathcal{C}_0$ denote the full subcategory spanned by compact objects. Then $\mathcal{C}[T_n^{-1}] \to \Exc^{n\,\mathrm{lex}}(\mathcal{C}_0,\,\mathcal{S})$ is a categorical equivalence.      
\end{proposition}
\begin{proof}
Note that any object of $\mathcal{C}$ is identified with a filtered colimit of corepresentable functors on compact objects, each of which is left exact.
Let $X$ be an $n$-excisive object. Then one has a chain of weak equivalences
\[
 P^n(X) \simeq P^n(\varinjlim \Hom_{\mathcal{C}_0}(C_\alpha,\, - )) \simeq \varinjlim \Hom_{\mathcal{C}_0}(C_\alpha,\, P^n(-) ).
\]
Therefore, $X \to P^n(X)$ is a weak equivalence if and only if $X$ is a filtered colimit of objects of $P^n(\mathcal{C}_0)$. \qed
\end{proof}

\begin{theorem}
Let $\mathcal{C}$ be a locally finitely presentable $\infty$-category admitting a final object. Then the full subcategory $P^n(\mathcal{C})$ spanned by $n$-excisive objects has the following universal property: if $\mathcal{D}$ is a locally finitely presentable $\infty$-category that is $n$-excisive, then any pointed functor $F_0:\mathcal{C}_0 \to \mathcal{D}$ admits a left Kan extension $F: \mathcal{C} \to \mathcal{D}$ that factors through the $n$-excisive localization $P^n(\mathcal{C})$. \qed  
\end{theorem}

\subsection{Stable and excisive \texorpdfstring{$\infty$-categories}{infinity-categories}}
\label{sec:excisive}
Let $\mathcal{C}$ be an $\infty$-category admitting finite colimits and
a final object, $\mathcal{D}$ a Goodwillie-differentiable
$\infty$-category, and $F: \mathcal{C} \to \mathcal{D}$ an excisive
functor. For any object $X$ of $\mathcal{C}$, the canonical morphism
$F(X) \to \Omega_{\mathcal{D}} F (\Sigma_\mathcal{C} X)$ is a weak
equivalence in $\mathcal{D}$. Therefore,
\[ 
\Omega_{\mathcal{D}} \circ
(-): \mathrm{Exc}^1(\mathcal{C},\, \mathcal{D} ) \to
\mathrm{Exc}^1(\mathcal{C},\, \mathcal{D} )
\] 
  is a categorical equivalence, whose inverse is $(-)\circ
  \Sigma_{\mathcal{C}}$. In addition, the stabilization functor
  $\Sigma_{\mathcal{D}}^\infty: \mathcal{D} \to \mathrm{Sp}(\mathcal{D})
  $ induces a categorical equivalence
\[ 
\Sigma_{\mathcal{D}}^\infty
  \circ (-): \mathrm{Exc}^1(\mathcal{C},\, \mathcal{D} ) \to
  \mathrm{Exc}^1(\mathcal{C},\, \mathrm{Sp}(\mathcal{D}) ), 
\]
whose inverse is induced by the forgetful functor
$\Omega^\infty_{\mathcal{D}}: \mathrm{Sp}(\mathcal{D}) \to \mathcal{D}$.

Applying this to the case $\mathcal{D} = \mathcal{S}_{*}$, one has the
following:
\begin{proposition}
\label{Excisive} Let $\mathcal{C}$ be a pointed $\infty$-category
admitting finite limits and colimits. The following conditions are
equivalent:
\begin{enumerate}[(1)]
\item The $\infty$-category $\mathcal{C}$ is excisive.
\item For any integer $n \ge 0$, the unit transformation
      $\mathrm{Id}_\mathcal{C} \to \Omega^n_\mathcal{C} \circ
      \Sigma_{\mathcal{C}}^n$ is a weak equivalence.
\item The stabilization $\Sigma_+^\infty: \mathcal{C} \to
      \mathrm{Sp}(\mathcal{C})$ induces an equivalence $
      \mathrm{Id}_\mathcal{C} \to \Omega^\infty_+ \circ \Sigma_+^\infty$.
\end{enumerate}
\end{proposition}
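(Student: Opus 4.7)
The plan is to establish the cycle $(1) \Rightarrow (2) \Rightarrow (3) \Rightarrow (1)$. Two ingredients will be used throughout: Proposition~\ref{n-Excisive} combined with the full faithfulness and finite-limit preservation of $\mathbb{Y}_\mathcal{C}$, which implies that $\mathcal{C}$ is excisive if and only if the identity functor $\mathrm{Id}_\mathcal{C}$ itself sends pushout squares to pullback squares (given a pushout $P = B \cup_A C$, the Yoneda image of the square is a pullback in $\mathrm{Pre}(\mathcal{C})$ iff $\mathbb{Y}(A) \simeq \mathbb{Y}(B \times_P C)$ iff $A \simeq B \times_P C$); and the observation recalled in the paragraph before the statement, that any excisive functor $F$ satisfies $F(X) \simeq \Omega_\mathcal{D} F(\Sigma_\mathcal{C} X)$. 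For $(1) \Rightarrow (2)$, I would apply this latter observation with $F = \mathrm{Id}_\mathcal{C}$ to produce the $n = 1$ case, and then run a short induction: if $\mathrm{Id} \to \Omega^{n-1} \Sigma^{n-1}$ is a weak equivalence, specialising at $\Sigma X$ and applying $\Omega$ yields $\Omega \Sigma X \simeq \Omega^n \Sigma^n X$, and composing with the $n = 1$ case delivers $X \simeq \Omega^n \Sigma^n X$ naturally in $X$.

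For $(2) \Rightarrow (3)$, I would invoke the standard presentation of the stabilization as a sequential colimit
\[
\Omega^\infty_+ \Sigma^\infty_+ X \simeq \varinjlim_{n} \Omega^n_\mathcal{C} \Sigma^n_\mathcal{C} X,
\]
with transition maps induced by the unit $\mathrm{Id} \to \Omega_\mathcal{C} \Sigma_\mathcal{C}$. Under hypothesis (2), each leg $X \to \Omega^n_\mathcal{C} \Sigma^n_\mathcal{C} X$ is a weak equivalence, hence so is the colimit map $X \to \Omega^\infty_+ \Sigma^\infty_+ X$.

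For $(3) \Rightarrow (1)$, I would exploit the stability of $\mathrm{Sp}(\mathcal{C})$, which is excisive by the corollary after Proposition~\ref{n-Excisive}. Given a pushout $P = B \cup_A C$ in $\mathcal{C}$, the left adjoint $\Sigma^\infty_+$ (colimit-preserving) sends it to a pushout in $\mathrm{Sp}(\mathcal{C})$, which is simultaneously a pullback by stability; applying the right adjoint $\Omega^\infty_+$ (which preserves limits) and then invoking the unit equivalence from (3) translates this into $A \simeq B \times_P C$ in $\mathcal{C}$, showing that $\mathrm{Id}_\mathcal{C}$ is $1$-excisive.

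The principal obstacle is the colimit formula underlying $(2) \Rightarrow (3)$: the identification $\Omega^\infty_+ \Sigma^\infty_+ X \simeq \varinjlim_{n} \Omega^n \Sigma^n X$ requires sequential colimits in $\mathcal{C}$ to commute with the iterated loop functors used in forming $\mathrm{Sp}(\mathcal{C})$, a Goodwillie-differentiability-type condition not automatically provided by the bare hypothesis of finite limits and colimits. In writing the proof I would either impose this condition explicitly on $\mathcal{C}$ or verify it from auxiliary hypotheses so that the standard formula for stabilization applies.
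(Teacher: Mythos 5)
Your cycle $(1)\Rightarrow(2)\Rightarrow(3)\Rightarrow(1)$ is the same overall structure as the paper's proof, and steps $(1)\Rightarrow(2)$ and $(3)\Rightarrow(1)$ are essentially the argument the paper runs (the paper leans on Lemma~\ref{presheaves}, i.e.\ Yoneda, where you spell out the induction and the pushout-to-pullback translation explicitly; both amount to: coYoneda being excisive gives $F(X)\simeq\Omega F(\Sigma X)$, iterate; and $\Sigma^\infty_+$ preserves colimits into a stable category, $\Omega^\infty_+$ preserves limits back).

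The genuine divergence is in $(2)\Rightarrow(3)$, and you correctly put your finger on the weak spot of your own route. You want the object-level formula $\Omega^\infty_+\Sigma^\infty_+X\simeq\varinjlim_n\Omega^n_\mathcal{C}\Sigma^n_\mathcal{C}X$, which requires the sequential colimit to exist in $\mathcal{C}$ and to commute with the finite limits used to form $\Sp(\mathcal{C})$ — a Goodwillie-differentiability hypothesis not granted by ``finite limits and colimits.'' The paper avoids imposing it: instead of computing the unit on objects, it computes mapping \emph{spaces},
\[
\Hom_{\Sp(\mathcal{C})}\left(\Sigma^\infty_+X,\,\Sigma^\infty_+Y\right)\simeq\varinjlim_n\Hom_\mathcal{C}\left(\Sigma^nX,\,\Sigma^nY\right),
\]
observes that under (2) this sequence is levelwise equivalent to the constant sequence at $\Hom_\mathcal{C}(X,Y)$, hence $\Sigma^\infty_+$ is fully faithful, and then concludes the unit $Y\to\Omega^\infty_+\Sigma^\infty_+Y$ is an equivalence by the standard adjunction/Yoneda argument. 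The point of this reformulation is that the sequential colimit is now taken in $\mathcal{S}$, where it always exists and commutes with finite limits, so the differentiability concern you flagged never arises. Your version is fixable by precisely this move — replace the object colimit by the mapping-space colimit — rather than by strengthening the hypotheses on $\mathcal{C}$; otherwise you would be proving a weaker statement than the one asserted.
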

\begin{proof}
First, we assume that $\mathcal{C}$ is excisive. Then, for any integer
$n \ge 0$, the unit 
\[ 
\mathrm{Id} \to \Omega_\mathcal{C}^n \circ \Sigma_\mathcal{C}^n :
\mathcal{C} \to \mathcal{C} 
\]
is a weak equivalence by Lemma~\ref{presheaves}.

In the case of condition (2), the stabilization functor
\[
\Sigma_+^\infty \mathcal{C} \to \mathrm{Sp}(\mathcal{C}) 
\] is fully faithful. Indeed,
for any $X$ and $Y$,
 \[
\Hom_{\Sp(\mathcal{C})}( \Sigma_+^\infty X ,\,
\Sigma^\infty_+ Y) \simeq \varinjlim_{n } \Hom_\mathcal{C} (\Sigma^n X
,\, \Sigma^n Y )
\] 
is weakly equivalent to the constant space $\Hom_\mathcal{C} ( X ,\,
Y)$. Again by Lemma~\ref{presheaves}, the unit $Y
\to \Omega_+^\infty( \Sigma_+^\infty(Y))$ is a weak equivalence.

Finally, we assume that the condition $(3)$ holds. Then, for any object
$X$ of $\mathcal{C}$, the coYoneda functor
\[ 
\mathbb{Y}^\vee(X) \simeq
(\Sigma^\infty_+)_*( \mathbb{Y}^\vee( \Sigma_+^\infty X) )=
\Hom_{\Sp(\mathcal{C})}( \Sigma_+^\infty X ,\, \Sigma^\infty_+ (-)):
\mathcal{C} \to \Sp(\mathcal{C}) \to \mathcal{S}
\] 
is excisive. 
 \qed
\end{proof}


\section{Application to the finite derived algebraic cobordism of perfectoid algebras}
\label{sec:perfectoid}

For a locally presentable category $\mathcal{C}$ with a final object, the functor category $\sSet^{\mathcal{C}^\omega}$ has the covariant model structure representing the $\infty$-category $\Fun^{\rm L}(N(\mathcal{C}),\,\mathcal{S})$. By \cite[p.906, Proposition 3.7.8]{HT}, the $n$-excisive approximation $P^n :\Fun^{\rm L}(\mathcal{C}, \, \mathcal{S}) \to \mathrm{Exc}^n ( N(\mathcal{C}),\, \mathcal{S})$ induces a Bousfield localization $P^{n}:\sSet^{\mathcal{C}^\omega} \to P^n(\sSet^{\mathcal{C}^\omega})$. Since $P^n$ has a right adjoint, it is accessible, and $P^n$ sends compact objects to compact objects. Hence, the $n$-excisive category $P^n(\mathcal{C}^\omega)$ is homotopically equivalent to the full subcategory of $P^n(\sSet^{\mathcal{C}^\omega})$ spanned by compact objects.

In this section, we consider $\E$-rings. Ordinary rings, whose homotopy groups are concentrated in degree zero, are called discrete $\E$-rings, or simply discrete rings. 

First, in Section~\ref{sec:finite syntomic}, to formulate a finite syntomic analogue of derived algebraic cobordism for non-unital $\E$-rings, we define finite syntomic (i.e., finite quasi-smooth) morphisms of $\E$-rings. In Section~\ref{sec:non-unital}, we review the excisive approximation of the pointed category of augmented commutative algebras. In Section~\ref{sec:MGL}, we verify that this finite derived algebraic cobordism preserves tilting equivalences between integral perfectoid algebras (Theorem~\ref{mainthm}).

\subsection{Finite syntomic morphisms of \texorpdfstring{$\E$-rings}{E-rings}}
\label{sec:finite syntomic}
We fix the base commutative unital ring $V$, which is a discrete $\E$-ring. 
The category $\CAlg_{V//V}$ of augmented commutative $V$-algebras is a pointed category, whose initial and terminal object is $V$. 
The operadic Smith-ideal theory~\cite[Theorem 4.4.1]{WY2024} induces an adjoint categorical equivalence
\[
  \mathrm{cok}:   \CAlg^{\rm nu}_{V}(\mathcal{C}) \rightleftarrows    \CAlg_{V//V}(\mathcal{C}): \ker
\]
of $\infty$-categories (see \cite[Corollary 5.11]{kato2023mathematics}). 
We define cotangent complexes by the language of Goodwillie calculus and Smith ideal theory. 
\begin{definition}[{\rm cf.}~\cite{HA} p.1296, Definition 7.3.2.14]
Let $\mathcal{C}$ be a symmetric monoidal $\infty$-category admitting finite colimits and a final object, $V$ a monoidal unit object of $\mathcal{C}$, and $\CAlg(\mathcal{C})_{V//V}$ the subcategory of $\mathcal{C}$ spanned by augmented commutative algebra objects. For an augmented $V$-algebra $P^1(\varepsilon): A \to V$, let $L_{A}$ denote the kernel of the excisive approximation $\varepsilon : P^1(A) \to V$. We call $L_A$ the {\it cotangent complex} of $A$.
\end{definition}

In derived algebraic geometry, intersection-theoretic data is unconditionally preserved by replacing flat transversality with quasi-smoothness. 
\begin{definition}
Let $A \to B$ be a morphism of augmented discrete $V$-algebras. We say that $B$ is {\it finite syntomic} over $A$ if it satisfies the following conditions:
\begin{enumerate}[(1)]
\item The $A$-algebra $B$ is a flat $A$-algebra and a finitely generated $A$-module.
\item The relative cotangent complex $L_{B/ A}$ is perfect and has Tor-amplitude in degrees $[-1,\,0]$.  
\end{enumerate}
\end{definition}
 
The definition of flatness of $\E$-algebras is described as follows: 
\begin{definition}[\cite{HA}, p.1240, Definition 7.2.2.10]
An $\E$-algebra morphism $A \to B$ is flat if 
\begin{itemize}
 \item[(1)] The homotopy group $\pi_0(B)$ is a flat $\pi_0(A)$-algebra.
 \item[(2)] For each $n \in \Z$, the induced morphism
\[  
    \pi_0(B) \otimes_{\pi_0(A)} \pi_n (A)  \to \pi_n(B)      
\]
is an isomorphism. 
\end{itemize}
\end{definition}
By definition, if $A$ is connected (resp. discrete), any flat $A$-algebra is connected (resp. discrete). 

We define finite syntomic morphisms for $\E$-rings, which correspond to finite quasi-smooth morphisms. Given an $\E$-ring $A$, let $\CAlg_{A}^{\flat}$ denote the full subcategory of $\CAlg_{A}$ spanned by flat $A$-algebras. 
\begin{definition}
A morphism $f: A \to B$ of $\E$-rings is said to be {\it finite syntomic} if $f$ belongs to $ \CAlg_{A}^{\rm FSyn}$. Namely, $f$ satisfies the following:
  \begin{itemize}
 \item[(1)] The $\E$-algebra morphism $f$ is flat.
 \item[(2)] The induced $V$-algebra homomorphism $\pi_0(f): \pi_0(A) \to \pi_0(B)$ is finite syntomic.  
 \end{itemize} 
\end{definition}
In other words, the $\infty$-category $\CAlg_{A}^{\rm FSyn}$ of finite syntomic $A$-algebras is defined by the homotopy Cartesian product
\[
  \CAlg_{A}^{\rm FSyn}= \CAlg_{A}^{\flat} \times_{\CAlg_{\pi_0(A)}^{\flat}  }   \CAlg_{\pi_0(A)}^{\rm FSyn}. 
\] 

With this definition, finite syntomic morphisms of $\E$-rings are locally represented as pullbacks of universal finite syntomic morphisms $p_d: V_d \to U_d$ $(d \ge 1)$ (see the Stacks Project~\cite{stacks-project}):
\begin{proposition}[Stacks Project, Lemma 49.11.7]
\label{univ-fsyn} Let $f:Y \to X$ be a finite syntomic morphism of (discrete) $\Spec{V}$-schemes. Then for any $x \in X$ there exists an integer $d \ge 1$ and a commutative diagram
\[
  \xymatrix@1{
  Y \ar[d]_f & \ar[l] X' \ar[r] \ar[d] & V_d \ar[d]_{p_d} \ar[dr] & \\ 
  X &  \ar[l] Y' \ar[r] & U_d \ar[r] & \Spec{V}
}
\] 
with the properties stated in \cite[Lemma 49.11.7]{stacks-project}.
\end{proposition}
The corresponding statement for $\E$-rings is as follows.
\begin{proposition}
\label{univ-fsyn-E}
Let $f: A \to B$ be a morphism of $\E$-rings. Then $f$ is finite syntomic if and only if, Zariski locally on $\Spec{\,\pi_0(A)}$, there exists an integer $d \ge 1$ such that the induced morphism $\Spec{B} \to \Spec{A}$ of derived schemes is a base change of the universal finite syntomic morphism $p_d: V_d \to U_d$ in Proposition~\ref{univ-fsyn}.
\end{proposition}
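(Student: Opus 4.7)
The plan is to reduce to the classical Proposition~\ref{univ-fsyn} via the Cartesian product description
\[
\CAlg_{A}^{\rm FSyn} = \CAlg_{A}^{\flat} \times_{\CAlg_{\pi_0(A)}^{\flat}} \CAlg_{\pi_0(A)}^{\rm FSyn},
\]
which identifies a finite syntomic $A$-algebra with the datum of a flat $A$-algebra whose underlying $\pi_0(A)$-algebra is classically finite syntomic.

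For the ``if'' direction, I assume $B \simeq A \otimes_{\mathcal{O}(U_d)} \mathcal{O}(V_d)$ arises as a derived base change of $p_d$ along some map $\mathcal{O}(U_d) \to A$. Since $p_d$ is classically finite syntomic, $\mathcal{O}(V_d)$ is flat over $\mathcal{O}(U_d)$, and flatness of $\E$-algebras is preserved under arbitrary derived base change; hence $B$ is flat over $A$. Moreover, $\pi_0(f): \pi_0(A) \to \pi_0(B) \cong \pi_0(A) \otimes_{\mathcal{O}(U_d)} \mathcal{O}(V_d)$ is a classical base change of $p_d$, so it is classically finite syntomic. Thus $f$ is finite syntomic by the Cartesian product description.

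For the ``only if'' direction, suppose $f$ is finite syntomic. Then $\pi_0(f)$ is classically finite syntomic, so Proposition~\ref{univ-fsyn} applies Zariski-locally on $\Spec \pi_0(A)$: after replacing $A$ by a Zariski localization, there is a classical ring map $\mathcal{O}(U_d) \to \pi_0(A)$ exhibiting $\pi_0(B) \cong \pi_0(A) \otimes_{\mathcal{O}(U_d)} \mathcal{O}(V_d)$. The crucial step is to lift this map to an $\E$-algebra map $\mathcal{O}(U_d) \to A$. Because $U_d$ is smooth over $\Spec \Z$, the cotangent complex $L_{\mathcal{O}(U_d)/\Z}$ is projective of finite rank in degree zero, so $\mathcal{O}(U_d)$ is formally smooth as an $\E$-algebra; the obstructions to lifting along each stage of the Postnikov tower $\cdots \to \tau_{\le n}(A) \to \tau_{\le n-1}(A) \to \cdots$ therefore vanish, and a compatible system of lifts assembles into the desired map via $A \simeq \lim_n \tau_{\le n}(A)$. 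Given such a lift I form $B' := A \otimes_{\mathcal{O}(U_d)} \mathcal{O}(V_d)$, which is flat over $A$ with $\pi_0(B') \cong \pi_0(B)$. By the Cartesian product characterization of $\CAlg_A^{\rm FSyn}$ we conclude $B \simeq B'$ as $A$-algebras.

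The main obstacle I expect is precisely this lifting step together with the associated gluing: while the vanishing of obstructions follows formally from projectivity of $L_{\mathcal{O}(U_d)/\Z}$, verifying that these lifts can be chosen coherently up the Postnikov tower, and that the local constructions patch across the Zariski cover supplied by Proposition~\ref{univ-fsyn}, requires careful bookkeeping of the derived descent data for $\mathcal{O}(U_d)$-algebra structures.
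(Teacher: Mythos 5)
Your ``only if'' direction takes a genuinely different route from the paper. The paper's argument asserts a pullback $\Spec B \simeq \Spec A \times_{\Spec \pi_0(A)} \Spec \pi_0(B)$ coming from flatness, and then pastes this square with the Stacks-project square over $\pi_0(A)$. However, the only natural morphism between $\Spec A$ and $\Spec\pi_0(A)$ is the closed immersion $\Spec\pi_0(A)\to\Spec A$ (induced by the truncation $A\to\pi_0(A)$), so the pasting as drawn produces a map from $\Spec\pi_0(A)$ to $U_d$, not one from $\Spec A$ to $U_d$; the ingredient that the paper's diagram leaves implicit is precisely the lift $\mathcal{O}(U_d)\to A$ of the classifying map, which you construct from projectivity of $L_{\mathcal{O}(U_d)/\Z}$, and this is the honest content of the direction. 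Of your two stated worries, the Postnikov-tower one is not a real gap: for a smooth $\Z$-algebra $R$ and connective $A$, the canonical map $\Map_{\CAlg}(R,A)\to\Hom_{\mathrm{Ring}}(R,\pi_0 A)$ is surjective on $\pi_0$, which is exactly the surjectivity you need, with no further bookkeeping. The Zariski-gluing caveat is real, but it is inherited from Proposition~\ref{univ-fsyn} itself, whose conclusion is only Zariski-local; the proposition under review should therefore also be read Zariski-locally on $\Spec A$, and this caveat affects the paper's formulation just as much as yours. Finally, for the ``if'' direction you verify both that flatness is preserved under derived base change and that $\pi_0(f)$ is classically finite syntomic, whereas the paper only records the $\pi_0$ statement, so your account of that direction is the more complete one.
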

\begin{proof}
By definition, $f$ is finite syntomic if and only if $B$ is flat over $A$ and $\pi_0(A)\to \pi_0(B)$ is finite syntomic. By Proposition~\ref{univ-fsyn}, the latter condition is equivalent to the existence of the stated local charts on $\Spec{\,\pi_0(A)}$. Since flatness is preserved by base change and can be checked on the corresponding homotopy Cartesian squares, these charts lift to the derived morphism $\Spec{B} \to \Spec{A}$. Conversely, any such local pullback of $p_d$ is flat and has finite syntomic $\pi_0$, so $f$ is finite syntomic. \qed
\end{proof}

\begin{corollary} 
\label{cobase-change}
For any $\E$-ring $A$, the $\infty$-category of finite syntomic $\E$-algebras is closed under cobase change. \qed
\end{corollary}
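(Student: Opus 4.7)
The plan is to reduce the claim to the preceding proposition, which characterizes a morphism $A \to B$ of $\E$-rings as finite syntomic precisely when $\Spec B \simeq \Spec A \times_{U_d} V_d$ for some $d \ge 1$, where $p_d : V_d \to U_d$ is the universal finite syntomic morphism of Proposition~\ref{univ-fsyn}. Once this characterization is available, closure under cobase change becomes a routine pasting argument for homotopy Cartesian squares of derived affine schemes.

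First I would fix a finite syntomic morphism $f : A \to B$ and an arbitrary morphism $g : A \to A'$ of $\E$-rings, and apply the preceding proposition to $f$ to obtain an integer $d \ge 1$ together with a homotopy Cartesian square exhibiting $\Spec B \simeq \Spec A \times_{U_d} V_d$. Next I would form the homotopy pushout $B \otimes_A A'$ in $\CAlg$, whose spectrum is the homotopy pullback $\Spec A' \times_{\Spec A} \Spec B$ in derived affine schemes. Pasting these two squares yields a homotopy Cartesian rectangle presenting $\Spec(B \otimes_A A')$ as $\Spec A' \times_{U_d} V_d$, so the cobase change $A' \to B \otimes_A A'$ is itself realized as a base change of $p_d$. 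The ``if''-direction of the preceding proposition then concludes that $A' \to B \otimes_A A'$ is finite syntomic, as required.

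Since the hard content (flatness and the tor-amplitude condition on the relative cotangent complex) has already been absorbed into the universal characterization, I do not anticipate a serious obstacle. The only points deserving mild care are the identification of the homotopy pushout in $\CAlg$ with a homotopy pullback of derived affine schemes, and the verification that the pasting lemma for homotopy Cartesian squares applies in this $\infty$-categorical setting; both are standard consequences of $\Spec$ being an equivalence onto the opposite $\infty$-category of derived affine schemes.
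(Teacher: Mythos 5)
Your proposal is correct and is precisely the argument the paper intends; the corollary is stated without proof because it follows immediately from the preceding proposition's characterization of finite syntomic morphisms as base changes of the universal morphisms $p_d: V_d \to U_d$, combined with the standard pasting lemma for homotopy Cartesian squares. The care you flag about identifying pushouts of $\E$-rings with pullbacks of derived affine schemes is appropriate but, as you note, standard.
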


\subsection{Excisive approximation of augmented algebras}
\label{sec:non-unital}
The excisive approximation of the category of augmented $V$-algebras is described as follows:
\begin{proposition}
\label{excisive-nonunital}
Let $V$ be a unital commutative algebra. Then the following hold:
\begin{enumerate}
 \item For any augmented $V$-algebra $R$, the unit
 \[
   P^1(R) \to V \oplus L_{R/V}
 \]
 is a weak equivalence in $P^1(\CAlg_{V//V})$.
 \item Let $A$ be a commutative $V$-algebra. Given a square-zero extension 
\[
  0 \to M \to     \tilde{A} \to A \to 0.
\]
Then, in the excisive approximation $P^1(\CAlg_{A//A})$, 
\[
    \Sigma_{A}^n P_{A}(\tilde{A}) \to A \oplus M [n]          
\] 
is a weak equivalence for any integer $n$. 
\end{enumerate}
\end{proposition}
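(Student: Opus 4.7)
The plan is to treat parts (1) and (2) in sequence, both reducing to the identification of $P^1(\CAlg_{V//V})$ (respectively $P^1(\CAlg_{A//A})$) with the stable $\infty$-category $\Mod_V$ (respectively $\Mod_A$), under which the linearization corresponds to the cotangent complex functor and its inverse is the trivial square-zero extension $M \mapsto V \oplus M$ (respectively $M \mapsto A \oplus M$).

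For (1), I first observe that $P^1(\CAlg_{V//V})$ is stable by the Corollary following Proposition~\ref{n-Excisive}, since it is the $1$-excisive approximation of a pointed $\infty$-category admitting finite limits and colimits. By definition, $L_{R/V}$ is the fibre of the augmentation $\varepsilon\colon P^1(R)\to V$ in this stable category, so we obtain a fibre sequence $L_{R/V}\to P^1(R)\to V$. The unit $V\to R$ of the augmented $V$-algebra $R$ provides a canonical section of $\varepsilon$ that is preserved by $P^1$; since split fibre sequences in a stable $\infty$-category are direct-sum decompositions, this yields the desired equivalence $P^1(R)\simeq V\oplus L_{R/V}$.

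For (2), the first step is to identify $P^1_A(\tilde A)\simeq A\oplus M$. Viewing $\tilde A$ as augmented over $A$ via $\tilde A\twoheadrightarrow A$, part (1) applied over the base $A$ gives $P^1_A(\tilde A)\simeq A\oplus L_{\tilde A/A}$, so it suffices to show $L_{\tilde A/A}\simeq M$. The vanishing $M\cdot M=0$ places $\tilde A$, up to equivalence, in the essential image of the trivial square-zero functor $M\mapsto A\oplus M\colon\Mod_A\to\CAlg_{A//A}$; since that functor is a section of the linearization $P^1_A$, this section returns the module $M$. With $P^1_A(\tilde A)\simeq A\oplus M$ in hand, the assertion $\Sigma_A^n P^1_A(\tilde A)\simeq A\oplus M[n]$ for every integer $n$ follows immediately, since suspension in the stable category $P^1(\CAlg_{A//A})\simeq\Mod_A$ is the (invertible) shift of modules.

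The main technical obstacle is the equivalence $P^1(\CAlg_{A//A})\simeq\Mod_A$ together with the fact that the trivial square-zero extension functor is a section of the linearization; this is the present paper's coYoneda-based analogue of Lurie's~\cite[Chapter 7]{HA} classical identification, and its verification relies on the Smith ideal equivalence $\mathrm{cok}\colon\CAlg^{\rm nu}_V\rightleftarrows\CAlg_{V//V}$ recalled earlier. Once that identification is granted, both parts of the proposition reduce to elementary facts about splittings and shifts in a stable $\infty$-category.
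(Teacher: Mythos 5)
Your argument for part (1) is essentially the same as the paper's: both rest on the stability of $P^1(\CAlg_{V//V})$ and the identification of $P^1(R)$ with the infinite loop object $\Omega^\infty\Sigma^\infty R$, after which the splitting by the unit $V\to R$ gives $P^1(R)\simeq V\oplus L_{R/V}$. Your formulation via the split fibre sequence is arguably cleaner than the paper's appeal to Proposition~\ref{Excisive}, which strictly speaking concerns excisive $\infty$-categories rather than the $1$-excisive approximation of a non-excisive one.

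For part (2) the routes diverge in a way worth noting. The paper applies $\Sigma^\infty_A$ to the cofibre sequence $M\to\tilde A\to A$ in $\CAlg_{A//A}$ and uses that $\Sigma^\infty_A A$ is contractible (being the zero object) to conclude $M\simeq\Sigma^\infty_A\tilde A\simeq L_{\tilde A/A}$; it never claims $\tilde A$ itself is a trivial square-zero extension. You instead assert that $M\cdot M=0$ already places $\tilde A$ in the essential image of the trivial square-zero functor $M\mapsto A\oplus M$, i.e.\ that $\tilde A\simeq A\oplus M$ as augmented $A$-algebras, and then read off $L_{\tilde A/A}\simeq M$ from the fact that this functor is a section of the linearization. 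That identification requires a unit section $A\to\tilde A$ of the augmentation; it is true under the literal hypothesis that $\tilde A$ is an object of $\CAlg_{A//A}$, but it is precisely what fails for the non-split square-zero extensions $A/\omega^n A\to A/\omega^{n-1}A$ to which the proposition is later applied in Proposition~\ref{1to1}. So while your proof is sound for the statement as written, the paper's stabilization argument is more robust: it yields $\Sigma^\infty_A\tilde A\simeq M$ without assuming a splitting at the level of algebras, which is the content actually needed downstream. You should either make the splitting hypothesis explicit or replace the ``$\tilde A$ is in the image of the trivial square-zero functor'' step with the cofibre-sequence computation of $\Sigma^\infty_A\tilde A$, which recovers $L_{\tilde A/A}\simeq M$ in the stable category without presupposing that $\tilde A$ splits.
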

\begin{proof}
Since $\Sigma^{\infty}_{A}A$ is contractible, the induced morphism $M \to \Sigma^\infty_{A}\tilde{A}$ is a weak equivalence. The unit $ P^1_{A}(\tilde{A}) \to \Omega^\infty_{A} \Sigma_{A}^\infty \tilde{A} \simeq A \oplus L_{\tilde{A} / A}$ is a weak equivalence. Therefore, $M \to L_{\tilde{A} / A}$ is a weak equivalence. \qed
\end{proof}

A commutative non-unital $V$-algebra $I$ is said to be {\it homotopically square-zero} if the product $I \otimes_{V} I \to I$ is null-homotopic.
\begin{proposition}
\label{square-zero} Let $I$ be a commutative non-unital $V$-algebra. Then $I$ is excisive if and only if $I$ is homotopically square-zero.
\end{proposition}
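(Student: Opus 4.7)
The plan is to pass through the operadic Smith-ideal equivalence $\mathrm{cok}:\CAlg^{\rm nu}_V \rightleftarrows \CAlg_{V//V}:\mathrm{ker}$ recalled in Section~\ref{sec:non-unital}. Under this equivalence $I$ corresponds to the augmented $V$-algebra $\tilde I := \mathrm{cok}(I)$, whose underlying $V$-module is $V \oplus I$ and whose augmentation ideal is $I$. Since the Smith-ideal adjunction is an equivalence of pointed $\infty$-categories it preserves coYoneda functors, so $I$ is excisive if and only if $\tilde I$ is. By Proposition~\ref{excisive-nonunital}(1) the latter is in turn equivalent to the unit map $\tilde I \to P^1(\tilde I) \simeq V \oplus L_{\tilde I/V}$ being a weak equivalence in $\CAlg_{V//V}$.

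For the ``only if'' direction, when this unit is a weak equivalence $\tilde I$ is itself equivalent to the trivial square-zero extension of $V$ by the $V$-module $L_{\tilde I/V}$. Comparing kernels of the augmentations yields $I \simeq L_{\tilde I/V}$ as $V$-modules, and under this identification the non-unital product $I \otimes_V I \to I$ transports to the zero product on $L_{\tilde I/V}$ inside $V \oplus L_{\tilde I/V}$, so $I \otimes_V I \to I$ is null-homotopic and $I$ is homotopically square-zero.

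For the converse I would argue that a null-homotopy of the product $I \otimes_V I \to I$ canonically refines to a split $\E$-algebra structure on $V \oplus I$, identifying $\tilde I$ with the trivial square-zero extension of $V$ by the $V$-module $I$. The standard cotangent complex calculation for such extensions then gives $L_{\tilde I/V} \simeq I$, and the unit $\tilde I \to V \oplus L_{\tilde I/V} \simeq V \oplus I$ becomes an equivalence, whence $\tilde I$ and hence $I$ is excisive.

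The hard part will be the refinement step in the converse: promoting a homotopy-level null-homotopy of the derived multiplication to a coherent trivial square-zero $\E$-algebra structure on $V \oplus I$. This amounts to showing that the space of such refinements is contractible, which one can verify either by invoking Koszul duality between the commutative and Lie $\infty$-operads, or by working directly with the bar filtration on $L_{\tilde I/V}$ and observing that it degenerates onto $I$ when the underlying product vanishes. Once this promotion is in place, the identification $L_{(V \oplus I)/V} \simeq I$ is immediate from the square-zero calculation and the argument closes.
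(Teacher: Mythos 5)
Your route is genuinely different from the paper's. You pass through the Smith-ideal equivalence and reduce everything to the cotangent complex: $I$ excisive iff $\tilde I = V\oplus I$ excisive iff the unit $\tilde I \to V\oplus L_{\tilde I/V}$ is an equivalence, invoking Proposition~\ref{excisive-nonunital}(1). The paper instead argues directly at the level of a single coCartesian square: writing $A = V\oplus I$, it applies the excisive condition to the square $A\to V\rightrightarrows V\to V\otimes_A V$, identifies the Cartesian condition with the map $A\otimes_V A \to A\times_V A$ being an equivalence, and then passes to augmentation ideals to extract exactly the binary multiplication $\mu\colon I\otimes_V I\to I$. Your ``only if'' direction is fine and essentially parallel in content, just repackaged through $L_{\tilde I/V}$. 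But the decomposition you chose makes the ``if'' direction strictly harder than necessary, and this is the source of the gap you flag.

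The gap is real, and it is worth being precise about why. The cotangent complex $L_{\tilde I/V} \simeq \Sigma^\infty I$ is computed from the \emph{entire} $\E_\infty$ multiplication on $I$ (all the maps $I^{\otimes n}_{h\Sigma_n}\to I$, coherently), not just the binary product. So ``$\mu$ null-homotopic'' does not, on its own, give $I\xrightarrow{\sim} L_{\tilde I/V}$: the obstruction lives in the higher Goodwillie layers $D_n\mathrm{Id}(I)$ and the coherence data relating them. Your proposed repair --- promoting the binary null-homotopy to a coherent trivial square-zero $\E_\infty$-structure by Koszul duality or a bar-filtration degeneration argument --- is precisely the hard content, and as written it is a sketch of a program rather than an argument; the claim that the space of such refinements is contractible is itself as strong as the statement to be proved. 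By contrast, the paper's proof never touches $L_{\tilde I/V}$ in the converse direction: the equivalence $A\otimes_V A\to A\times_V A$ is a purely one-square, binary-multiplication statement, so no coherence lifting is needed. That is the concrete advantage of the paper's more elementary route, and it is exactly where your cotangent-complex reformulation, elegant as it is for the forward implication, costs you.
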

\begin{proof}
This follows directly from the properties of exact functors and Ken Brown's lemma applied to $A \otimes_V A \simeq A \times_V A$. \qed
\end{proof}

\subsection{Finite derived algebraic cobordism of non-unital algebras}
\label{sec:MGL}
The classical motivic cobordism $\MGL$ enforces strict $\mathbb{A}^1$-invariance. Over mixed characteristic bases, this destroys nilpotent thickenings and makes linear approximations trivial. To retain infinitesimal deformation data, we work with unlocalized derived algebraic cobordism $\mathrm{dMGL}$ \cite{LS16, Ann21}.

By utilizing spans of quasi-smooth morphisms, derived algebraic cobordism unconditionally preserves higher intersection data. In this section, we construct a finite syntomic analogue of non-unital derived cobordism by directly using the unlocalized $\infty$-groupoid of finite syntomic $\E$-algebras, bypassing the $\mathbb{A}^1$-localization entirely.

\begin{definition}
Let $V$ be a unital algebra. For any augmented $V$-algebra $A$, let $\mathrm{FSyn}^{\rm aug}(A)$ denote the unlocalized $\infty$-groupoid of finite syntomic $A$-algebras, $\mathrm{FSyn}_*(A)$ the homotopy fiber of $\mathrm{FSyn}(A) \to \mathrm{FSyn}(V)$, and 
\[
  \mathrm{FSyn}^{\rm aug}_*: \CAlg_{V//V} \to \mathcal{S}
\] 
its left Kan extension. The {\it finite non-unital derived algebraic cobordism} $\mathrm{dMGL}^{\rm nu,f}$ over $\Spec{V}$ is defined to be the stabilization 
\[
   \Sigma^\infty \mathrm{FSyn}_*^{\rm aug}: \CAlg_{V//V} \to \Sp.
\] 
\end{definition}

Crucially, because we do not impose $\mathbb{A}^1$-localization, the functor $\mathrm{FSyn}_*^{\rm aug}$ faithfully detects the infinitesimal thickenings governed by the derived cotangent complex. Applying Corollary~\ref{Universal-cpt}, $\mathrm{dMGL}^{\rm nu,f}$ factors through the excisive approximation $P^1(\CAlg_{V//V})$.

\begin{lemma} 
\label{key-lemma} Given a square-zero extension $\phi: \tilde{A} \to A$, let $M$ denote the kernel of $\phi$. Write $\Sigma^1_{A} \tilde{A}= \Sigma^1 P^1_A(\tilde{A})=A \oplus M[1]$. Then we can explicitly construct a model
\[
 \tilde{B}= B \oplus (B \otimes_{A} M[1]) \simeq B \otimes_{A} (A \oplus M[1]). 
\] 
This lifting $\tilde{B}$ satisfies the following properties:
\begin{enumerate}
 \item The induced morphism $\tilde{B} \otimes_{\Sigma^1_{A} \tilde{A} } A \to B$ is a weak equivalence.
 \item The $\Sigma^1_{A} \tilde{A}$-algebra $\tilde{B}$ is finite syntomic over $\Sigma^1_{A} \tilde{A}$. 
\end{enumerate}
\end{lemma}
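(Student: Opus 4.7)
The plan is to read the formula $\tilde{B}=B\oplus (B\otimes_{A} M[1])\simeq B\otimes_{A}(A\oplus M[1])$ as the cobase change of the (implicitly assumed) finite syntomic morphism $A\to B$ along the structure map $A\to A\oplus M[1]=\Sigma^{1}_{A}\tilde{A}$ of Proposition~\ref{excisive-nonunital}(2). The first displayed equality merely records that, since $A\oplus M[1]$ is the square-zero extension of $A$ by $M[1]$, the underlying $B$-module of the cobase change decomposes as $B\otimes_{A}(A\oplus M[1])\simeq (B\otimes_{A}A)\oplus (B\otimes_{A}M[1])=B\oplus (B\otimes_{A}M[1])$. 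So once the explicit model is interpreted this way, both assertions follow from general base-change formalism rather than any hands-on computation.

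For (1), I would apply transitivity of the relative tensor product. The structure map $A\oplus M[1]\to A$ is the projection (the augmentation of the square-zero extension), and
\[
\tilde{B}\otimes_{\Sigma^{1}_{A}\tilde{A}}A
\;\simeq\;\bigl(B\otimes_{A}(A\oplus M[1])\bigr)\otimes_{A\oplus M[1]}A
\;\simeq\; B\otimes_{A}\bigl((A\oplus M[1])\otimes_{A\oplus M[1]}A\bigr)
\;\simeq\; B\otimes_{A}A\;\simeq\; B,
\]
so the canonical comparison morphism is a weak equivalence. This step is entirely formal and only uses the $\E$-algebra relative tensor product.

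For (2), assume (as the setup intends) that $A\to B$ is finite syntomic. The morphism $\Sigma^{1}_{A}\tilde{A}\to\tilde{B}$ is, by construction, the cobase change of $A\to B$ along the $\E$-algebra map $A\to A\oplus M[1]$. Corollary~\ref{cobase-change} states that finite syntomic morphisms of $\E$-rings are closed under cobase change, so $\Sigma^{1}_{A}\tilde{A}\to\tilde{B}$ is finite syntomic. Concretely, flatness is preserved by base change, and on $\pi_{0}$ the map reduces to $\pi_{0}(A)\to\pi_{0}(B)$ (because $\pi_{0}(A\oplus M[1])=\pi_{0}(A)$, $M$ being a $(-1)$-connective object), which is finite syntomic by hypothesis; the tor-amplitude condition on the relative cotangent complex is also inherited under cobase change, $L_{\tilde{B}/(A\oplus M[1])}\simeq \tilde{B}\otimes_{B}L_{B/A}$.

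The expected obstacle is not any of these formal manipulations but rather making sure the structure maps are unambiguous: $A\oplus M[1]$ is simultaneously an $A$-algebra (via the inclusion, along which we take the cobase change) and an augmented $A$-algebra (via the projection, which is used in (1) and which identifies $A\oplus M[1]$ with $\Sigma^{1}_{A}\tilde{A}$ in $P^{1}(\CAlg_{A//A})$). One has to be careful that the cobase change defining $\tilde{B}$ is formed with respect to the inclusion, so that afterwards $\tilde{B}$ becomes an $(A\oplus M[1])$-algebra to which the augmentation can be applied in (1). Once this bookkeeping is made explicit, everything reduces to Proposition~\ref{excisive-nonunital}(2) and Corollary~\ref{cobase-change}.
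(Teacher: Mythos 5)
Your proposal matches the paper's own argument: both define $\tilde{B}$ as the cobase change $B\otimes_{A}(A\oplus M[1])$, prove (1) by transitivity of the relative tensor product, and prove (2) by invoking Corollary~\ref{cobase-change} for closure of finite syntomic morphisms under cobase change. The only difference is that the paper's proof also tacks on a uniqueness argument (any other flat $A\oplus M[1]$-algebra lifting $B$ is equivalent to $\tilde{B}$), which is not part of the lemma's stated claims but is used implicitly in Proposition~\ref{1to1}; your bookkeeping remark about the two $A$-algebra structures on $A\oplus M[1]$ is a helpful clarification that the paper leaves tacit.
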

\begin{proof}
Property (1) follows from $B \otimes_A (A \oplus M[1]) \otimes_{A \oplus M[1]} A \simeq B$. Property (2) follows from Corollary~\ref{cobase-change}, as the base change $A \oplus M[1] \to \tilde{B}$ along the finite syntomic algebra homomorphism $A \to B$ is also finite syntomic. \qed  
\end{proof}

We recall the following $\infty$-categorical deformation theory:
\begin{theorem}[{\rm cf.}~\cite{HA}, pp.1349--1350, Remark 7.4.2.2 and Remark 7.4.2.3]
\label{Kodaira-Spencer}
Let $A$ be a unital $\E$-ring and $ \tilde{A} \overset{\phi}{\to} A$ a square-zero extension with homotopy kernel $M$. Let $B$ be an $A$-algebra. If $\Ext_{B}^{2}(L_{B/A},\, B \otimes_A M)=0$, there exists an $\tilde{A}$-algebra $\tilde{B}$ such that the induced homomorphism $\tilde{B} \otimes_{\tilde{A}} A \to B$ is an isomorphism. Further, if $\Ext_{B}^{1}(L_{B/A},\, B \otimes_A M)=0$, the flat $\tilde{A}$-algebra $\tilde{B}$ is uniquely determined up to quasi-isomorphism. \qed
\end{theorem}

\begin{proposition}
\label{1to1} 
Let $\phi: \tilde{A} \to A$ be a square-zero extension of commutative discrete $V$-algebras, and let $M$ denote the kernel of $\phi$. Then the functor $(-) \otimes_{ \Sigma^1 P^1_{A}(\tilde{A})} A$ induces a weak equivalence
\[
 (-) \otimes_{ \Sigma^1 P^1_{A}(\tilde{A})} A : \mathrm{FSyn}^{\rm aug}( \Sigma^1 P^1_{A}(\tilde{A}) ) \to \mathrm{FSyn}^{\rm aug}(A)  
\]
of unlocalized spaces. 
\end{proposition}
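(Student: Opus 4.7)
The plan is to identify the asserted map with the base-change functor along the augmentation $A \oplus M[1] \to A$ and exhibit it as a weak equivalence of $\infty$-groupoids whose quasi-inverse is the cobase change constructed in Lemma~\ref{key-lemma}, so that the content reduces entirely to the existence and uniqueness statements of that lemma.

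First, by Proposition~\ref{excisive-nonunital}(2) I would identify $\Sigma^1 P^1_A(\tilde{A})$ with the trivial square-zero extension $A \oplus M[1]$ in the excisive approximation $P^1(\CAlg_{A//A})$. Under this identification, the map in the statement becomes the base-change functor
$$
r := (-) \otimes_{A \oplus M[1]} A : \mathrm{FSyn}^{\rm aug}(A \oplus M[1]) \to \mathrm{FSyn}^{\rm aug}(A),
$$
so it suffices to show that $r$ is a weak equivalence. I would then introduce the candidate quasi-inverse
$$
s := (-) \otimes_A (A \oplus M[1]) : \mathrm{FSyn}^{\rm aug}(A) \to \mathrm{FSyn}^{\rm aug}(A \oplus M[1]), \qquad s(B) = B \oplus (B \otimes_A M[1]),
$$
which lands in finite-syntomic algebras by Corollary~\ref{cobase-change}. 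The composition $r \circ s$ is canonically equivalent to the identity via $(B \otimes_A (A \oplus M[1])) \otimes_{A \oplus M[1]} A \simeq B$, establishing essential surjectivity of $r$.

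The substantive step is $s \circ r \simeq \mathrm{id}$. For $\tilde B \in \mathrm{FSyn}^{\rm aug}(A \oplus M[1])$, setting $B := r(\tilde B)$, the algebra $s(r(\tilde B)) \simeq B \oplus (B \otimes_A M[1])$ is a flat $(A \oplus M[1])$-algebra whose restriction to $A$ is again $B$, so Lemma~\ref{key-lemma}(1)--(2) together with the uniqueness clause of that lemma identify $s(r(\tilde B))$ with $\tilde B$ via a weak equivalence. The main obstacle I anticipate is upgrading these pointwise equivalences to a \emph{natural} equivalence of $\infty$-functors rather than a mere collection of objectwise comparisons. I expect to resolve this by observing that the comparison map $s(r(\tilde B)) \to \tilde B$ is canonical: since $s(r(\tilde B))$ is the cobase change of $B$ along $A \to A \oplus M[1]$, a map into $\tilde B$ is determined by the structural $A$-algebra map $B \to \tilde B$ induced by the zero section, and this assignment is manifestly functorial in $\tilde B$. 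Restricting to the full sub-$\infty$-groupoids of finite-syntomic algebras then promotes the pointwise equivalence into a natural equivalence $s \circ r \simeq \mathrm{id}$, completing the argument.
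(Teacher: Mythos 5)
Your proposal takes a genuinely different route from the paper. The paper argues through the deformation-theoretic machinery: it uses the finite syntomic presentation $0\to I\to A[t]\to B\to 0$ to identify $L_{B/A}\simeq I/I^2[1]$ with $I/I^2$ projective, concludes $\Ext^n_B(L_{B/A},B\otimes_A M)=0$ for $n\ge 2$, and then applies Theorem~\ref{Kodaira-Spencer} to realize the lift $\tilde B$ canonically as the homotopy fiber of a derivation $\eta_B$ induced functorially from the classifying derivation $d:A\to M[2]$ of the extension. Your approach instead builds an explicit candidate quasi-inverse $s$ by cobase change and tries to reduce everything to Lemma~\ref{key-lemma}. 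The structural decomposition (Proposition~\ref{excisive-nonunital}(2), Corollary~\ref{cobase-change}, the identity $r\circ s\simeq\mathrm{id}$) is shared with the paper, and your reduction is clean and arguably more transparent up to that point.

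However, there is a real gap precisely where you flagged the difficulty. The "structural $A$-algebra map $B\to\tilde B$ induced by the zero section" does not exist in the direction you need. The zero section $i:A\to A\oplus M[1]$ endows $\tilde B$ with an $A$-algebra structure, but the canonical map runs $\tilde B\to B=\tilde B\otimes_{A\oplus M[1]}A$, i.e. from the total algebra to the quotient; a map $B\to\tilde B$ splitting it is exactly the nontrivial lifting datum you are trying to produce, so taking it as given is circular. By the $i_!\dashv i^*$ adjunction, a map $s(r(\tilde B))\to\tilde B$ over $A\oplus M[1]$ corresponds to an $A$-algebra map $B\to i^*\tilde B$, and you have not constructed one naturally in $\tilde B$. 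Lemma~\ref{key-lemma} only gives an objectwise, non-canonical comparison between two flat lifts of a fixed $B$; upgrading that to a natural equivalence $s\circ r\simeq\mathrm{id}$ of functors on the $\infty$-groupoid requires an extra canonical ingredient. The paper supplies exactly this by identifying $\tilde B$ as the fiber of the naturally defined derivation $\eta_B=\mathrm{Id}_B\otimes d[1]$, with uniqueness guaranteed by the Ext vanishing; alternatively one could argue that the two ring maps $\mathrm{id},\,i\circ\pi:A\oplus M[1]\to A\oplus M[1]$ become $\A^1$-homotopic via $t\mapsto(a,tm)$, but neither of these is present in your proposal. As written, the pointwise uniqueness does not yield the asserted weak equivalence of spaces.
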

\begin{proof}
Put
\[
R=\Sigma^1 P^1_A(\tilde{A})\simeq A\oplus M[1].
\]
By Lemma~\ref{key-lemma}, the functor
\[
 s:\mathrm{FSyn}^{\rm aug}(A)\to \mathrm{FSyn}^{\rm aug}(R),\qquad
 B\mapsto B\otimes_A R
\]
is well-defined, and the composite $(-)\otimes_R A\circ s:\mathrm{FSyn}^{\rm aug}(A) \to \mathrm{FSyn}^{\rm aug}(A) $
is equivalent to the identity functor on $\mathrm{FSyn}^{\rm aug}(A)$.

Conversely, let $C$ be a finite syntomic $R$-algebra and put
$B=C\otimes_R A$. By Corollary~\ref{cobase-change}, $B$ is finite
syntomic over $A$, hence flat over $A$. Since $C$ is flat over $R$ and
$\pi_0(R)=A$, the canonical morphism $
 B\otimes_A R \to C $ induces an isomorphism on $\pi_0$. For each $i\ge 0$, one has
\[
 \pi_i(B\otimes_A R)\cong \pi_0(B)\otimes_A \pi_i(R)
 \cong \pi_0(C)\otimes_A \pi_i(R)
 \cong \pi_i(C),
\]
where the first isomorphism uses that $B$ is flat over $A$, and the
last one uses that $C$ is flat over $R$. Therefore $B\otimes_A R \to C$
is a weak equivalence of connective $R$-algebras. Thus, the composition $s\circ((-)\otimes_R A)$
is also equivalent to the identity. Hence $(-)\otimes_R A$ is an
equivalence of unlocalized spaces. \qed
\end{proof} 

We also recall the Milnor exact sequence for towers of spectra: for any
inverse system $\{X_n\}_{n\ge 1}$, one has a short exact sequence
\[
0\to \varprojlim\nolimits^1_n \pi_{*-1}(X_n)\to
\pi_*(\varprojlim_n X_n)\to \varprojlim_n \pi_*(X_n)\to 0.
\]

\begin{theorem}
\label{mainthm} Let $V$ be a perfectoid valuation ring and $A$ be an integral perfectoid $V$-algebra. The tilting functor $(-)^\flat : \CAlg_A \to \CAlg_{A^\flat}$ induces a weak equivalence
\[
\mathrm{dMGL}^{\rm nu,f}(A) \to \mathrm{dMGL}^{\rm nu,f}(A^\flat)
\]
of spectra.
\end{theorem}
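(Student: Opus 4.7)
The plan is to exploit the factorization of $\MGL^{\rm nu}$ through the first excisive approximation $P^1(\CAlg_{V//V})$, and then combine the tilting identification with the square-zero invariance of $\mathrm{FSyn}^{\rm aug}_*$ from Proposition~\ref{1to1}.

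First, I reduce to first-order data. As recorded after the definition of $\MGL^{\rm nu}$, the functor $\MGL^{\rm nu} = \Sigma^\infty_{\rm fr} \mathrm{FSyn}^{\rm aug}_*$ factors through $P^1(\CAlg_{V//V})$, since $\mathrm{FSyn}$ preserves filtered colimits and Theorem~\ref{Universality} applies. By Proposition~\ref{excisive-nonunital}(1), the excisive approximation of an augmented $V$-algebra $R$ is $P^1(R) \simeq V \oplus L_{R/V}$. Hence it suffices to establish that the tilting morphism induces an equivalence on these first-order data, compatibly with the framed stabilization.

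Next, I set up the common mod-$p$ identification. For an integral perfectoid $V$-algebra $A$, the tilting equivalence furnishes a canonical isomorphism $A/p \simeq A^\flat/\varpi^\flat$ after compatible choices of pseudo-uniformizers, and both $A$ and $A^\flat$ are derived complete, with $A = \varprojlim_n A/p^n$ and $A^\flat = \varprojlim_n A^\flat/(\varpi^\flat)^n$. Crucially, for $n \geq 2$ each transition $A/p^n \to A/p^{n-1}$ is a square-zero extension with kernel $p^{n-1}A/p^n A$, and similarly on the tilt side. Applying Proposition~\ref{1to1} inductively along these towers yields, at each level, a weak equivalence
\[
\mathrm{FSyn}^{\rm aug}\!\left(\Sigma^1 P^1_{A/p^{n-1}}(A/p^n)\right) \to \mathrm{FSyn}^{\rm aug}(A/p^{n-1}),
\]
and the analogous statement on the tilt side. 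Starting from the common base $A/p \simeq A^\flat/\varpi^\flat$, the two towers then produce compatible equivalences $\MGL^{\rm nu}(A/p^n) \simeq \MGL^{\rm nu}(A^\flat/(\varpi^\flat)^n)$ at every finite level.

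The main obstacle is passing from these finite truncations to the perfectoid algebras themselves, i.e., establishing continuity of $\MGL^{\rm nu}$ with respect to $p$-adic (respectively $\varpi^\flat$-adic) completion. Since $\MGL^{\rm nu}$ is constructed from filtered-colimit-preserving ingredients followed by framed stabilization, the inverse limit across the tower must be handled by a pro-nilpotent argument inside $P^1(\CAlg_{V//V})$. The successive fibers $V \oplus (p^n A/p^{n+1}A)$ are homotopically square-zero in the sense of Proposition~\ref{square-zero}, hence lie in the excisive subcategory, so a Milnor-type exactness statement for $\Sigma^\infty_{\rm fr}$ assembles the level-by-level equivalences into the desired weak equivalence $\MGL^{\rm nu}(A) \to \MGL^{\rm nu}(A^\flat)$. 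Establishing this final interchange of stabilization with the inverse limit is the technical heart of the argument.
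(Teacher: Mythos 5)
Your proposal follows essentially the same route as the paper's proof: reduce mod $\omega$ using the tower $A_n = A/\omega^n A$, identify the common base $A_1 \simeq A_1^\flat$, propagate equivalences up the towers via Proposition~\ref{1to1} applied to the square-zero extensions $A_n \to A_{n-1}$, and recover the statement for $A$ and $A^\flat$ by a Milnor-sequence argument across the inverse limit. The only substantive difference is in the last step: you flag the interchange of framed stabilization with the inverse limit as ``the technical heart'' and leave it as a task to be established, whereas the paper writes out the $\varprojlim$/$\varprojlim^1$ Milnor ladder for $\pi_*(\mathrm{FSyn}^{\rm aug}_*(A_n))$ and $\pi_*(\mathrm{FSyn}^{\rm aug}_*(A_n^\flat))$ explicitly and concludes by the five lemma. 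Note that the paper's ladder silently uses the continuity $\mathrm{FSyn}^{\rm aug}_*(A) \simeq \varprojlim_n \mathrm{FSyn}^{\rm aug}_*(A_n)$ coming from $\omega$-adic completeness of the integral perfectoid algebra; you should make that identification explicit rather than deferring it, since once it is in place the Milnor sequence comparison is immediate and there is nothing further to ``establish.'' Your preliminary reduction via $P^1(\CAlg_{V//V})$ and Proposition~\ref{excisive-nonunital} is harmless but is not actually invoked in the paper's argument beyond the factorization of $\MGL^{\rm nu}$ already recorded before the theorem.
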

\begin{proof}
Put $F:=\mathrm{FSyn}^{\rm aug}_*:\CAlg_{V//V}\to\mathcal{S}$. Because $F$ is unlocalized, it accurately captures infinitesimal deformations and factors through $P^1(\CAlg_{V//V})$; hence $F$ is reduced and excisive.

Choose a topologically nilpotent element $\omega\in V$, and let $\omega^\flat\in V^\flat$ denote its tilt.

For each $n\ge 1$, write $A_n=A/\omega^nA$ and $A_n^\flat=A^\flat/(\omega^\flat)^nA^\flat$. For each $n\ge 2$, the short exact sequence $0\to \omega^{n-1}A/\omega^nA\to A_n\to A_{n-1}\to 0$ is a square-zero extension. Applying Proposition~\ref{1to1} to $A_n\to A_{n-1}$, we obtain a weak equivalence $F(\Sigma^1_{A_{n-1}}A_n)\to F(A_{n-1})$.

Since $F$ is reduced and excisive, $F(\Sigma X)\simeq \Omega F(X)$. Therefore, $\Omega F(A_n)\simeq F(A_{n-1})$, yielding $F(A_n)\simeq \Sigma^{n-1}F(A_1)$. Using $A_1\cong A_1^\flat$, we get weak equivalences $F(A_n)\to F(A_n^\flat)$ for all $n\ge 1$.

By Proposition~\ref{univ-fsyn-E}, finite syntomic algebras are Zariski locally obtained from the finitely presented universal families of \cite[Section 49.11]{stacks-project}. For the derived descent and completion formalism that allows these finite-presentation charts to be used for connective $\E$-rings and their nilpotent thickenings, see Lurie's derived algebraic geometry \cite{DAGXI,DAGXII}. Hence \cite[Proposition 32.6.1]{stacks-project} implies that compatible systems of finite syntomic algebras over the towers $\{A_n\}_n$ and $\{A_n^\flat\}_n$ algebraize uniquely, so the canonical maps
\[
F(A)\to \varprojlim_n F(A_n),\qquad
F(A^\flat)\to \varprojlim_n F(A_n^\flat)
\]
are weak equivalences.

Applying the preceding Milnor exact sequence to the towers
$X_n=F(A_n)$ and $X_n=F(A_n^\flat)$, we obtain short exact sequences
\[
0\to \varprojlim\nolimits^1_n\pi_{*-1}(F(A_n)) \to
\pi_*(\varprojlim_n F(A_n))\to \varprojlim_n\pi_*(F(A_n))\to 0
\]
and
\[
0\to \varprojlim\nolimits^1_n\pi_{*-1}(F(A_n^\flat)) \to
\pi_*(\varprojlim_n F(A_n^\flat))\to \varprojlim_n\pi_*(F(A_n^\flat))\to 0.
\]
Since $F(A_n)\simeq F(A_n^\flat)$ for all $n\ge 1$, the left and right
terms are isomorphic, hence the middle terms are also isomorphic. Thus
\[
\varprojlim_n F(A_n)\to \varprojlim_n F(A_n^\flat)
\]
is a weak equivalence. Combining this with the canonical identifications above, we obtain that
\[
(-)^\flat:F(A)\to F(A^\flat),
\]
is a weak equivalence, implying that  
\[
\mathrm{dMGL}^{\rm nu,f}(A)\simeq \Sigma^{\infty}F(A) \simeq
\Sigma^{\infty}F(A^\flat) \simeq \mathrm{dMGL}^{\rm nu,f}(A^\flat).
\]
\qed
\end{proof}

\subsection{Finite derived algebraic cobordism of almost algebras}
\label{sec:alMGL} 
Following the Smith-ideal formulation of almost mathematics in~\cite{kato2023mathematics}, almost mathematics is described by base rings and their idempotent ideals. We consider the case where the base ring $V$ has an idempotent ideal $\mathfrak{m}$. We strictly assume that $\mathfrak{m} \subsetneq V$ is a proper idempotent ideal, and $\tm= \mathfrak{m} \otimes_V \mathfrak{m}$ is a flat $V$-module.

\begin{definition}
Let $A$ be an almost $V$-algebra. For an almost $A$-algebra $B$, write
\[
  B_{!!}:=V\amalg_{\tm}(\tm\otimes_V B)
\]
for its almost unitalization. We say that $B$ is almost finite syntomic
over $A$ if the induced morphism $A_{!!}\to B_{!!}$ is finite syntomic.
Let $\CAlg^{\rm alFSyn}_{A}$ be the full subcategory of almost
$A$-algebras spanned by almost finite syntomic $A$-algebras, and let
$\mathrm{alFSyn}(A)$ denote its underlying $\infty$-groupoid. The {\it
finite almost derived algebraic cobordism} $\mathrm{dMGL}^{a,\rm fin}(A)$
is defined to be the stabilization $\Sigma^{\infty}\mathrm{alFSyn}(A)$.
\end{definition}

To relate this almost construction to the non-unital one, use the
almost unitalization functor of~\cite{kato2023mathematics}. For every
almost $V$-algebra $B$ there is a natural equivalence
\[
  \tm\otimes_V B_{!!}\simeq \tm\otimes_V B.
\]
This is the point needed below: after passing from an almost algebra to
its almost unitalization, tensoring again with $\tm$ recovers the same
almost algebra. Hence the almost finite syntomic groupoid is a homotopy
retract of the finite syntomic groupoid associated with the non-unital
algebra $V\oplus(\tm\otimes_V A)$: the inclusion is given by almost
unitalization, and the retraction is induced by tensoring with $\tm$.
After stabilization,
$\mathrm{dMGL}^{a,\rm fin}(A)$ is therefore a homotopy direct factor of
$\mathrm{dMGL}^{\rm nu,f}(V\oplus(\tm\otimes_V A))$. This gives the
following corollary of Theorem~\ref{mainthm}.
\begin{corollary}
\label{almost-main} Let $V$ be an integral perfectoid valuation ring and $\mathfrak{m}$ the proper idempotent ideal of topological nilpotent elements. For any integral perfectoid $V$-algebra $A$, the tilting functor $(-)^\flat: \CAlg_{V} \to \CAlg_{V^\flat}$ induces a weak equivalence
\[
  (-)^\flat: \mathrm{dMGL}^{a,\rm fin}(A) \to \mathrm{dMGL}^{a,\rm fin}(A^\flat)
\]
of finite almost derived algebraic cobordism spectra.
\end{corollary}
\begin{proof}
By the preceding paragraph, there is a homotopy retract exhibiting
\[
  \mathrm{dMGL}^{a,\rm fin}(A)
\]
as a homotopy direct factor of
\[
  \mathrm{dMGL}^{\rm nu,f}(V\oplus(\tm\otimes_V A)).
\]
The tilting equivalence of Theorem~\ref{mainthm} is compatible with this
retract, so passing to the direct factor gives the asserted weak equivalence.
\qed
\end{proof}

\section{Approximation of algebraic cobordism to \texorpdfstring{$K$-theory}{K-theory}}
\label{sec:functor} In this section, we apply Goodwillie calculus to
algebraic cobordism and $K$-theory. In Section~\ref{sec:app-fun}, we
define approximation functors. In Section~\ref{sec:MGL2K}, we apply
them to algebraic cobordism and $K$-theory, and prove Bott periodicity
and Gabber rigidity.

\subsection{Approximation functors}
\label{sec:app-fun}
Let $\alpha:F \to G$ be a natural transformation of functors
$F,G:\mathcal{C} \to \mathcal{D}$, and let
$\mathrm{cok}\,\alpha$ be its homotopy cokernel. Under suitable
conditions, let $P^n_{G}(F):\mathcal{C} \to \mathcal{D}$ be the fiber of
\[
G \to \mathrm{cok}\,\alpha \to P^n(\mathrm{cok}\,\alpha).
\]
Then $P^0_G(F) \to G$ is an equivalence. Thus we get a tower
\[
   \cdots  \to    P^n_G(F) \to P^{n-1}_G(F) \to \cdots \to P^1_G(F) \to P^0_G(F) \simeq G : \mathcal{C} \to \mathcal{D}.
\]
We call $P^n_{G}(F)$ the {\it $n$-Goodwillie approximation} of $F$ to $G$.

\begin{proposition}
Let $F \to G: \mathcal{C} \to \mathcal{D} $ be a natural transformation
of functors from $\mathcal{C}$ to $\mathcal{D}$. If $G$ is $n$-excisive,
the $n$-Goodwillie approximation $P^n_G(F) $ is $n$-excisive.
\end{proposition}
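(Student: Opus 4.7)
The plan is to exhibit $P^n_G(F)$ as a finite limit of two $n$-excisive functors and invoke closure of the $n$-excisive subcategory of $\Fun(\mathcal{C},\,\mathcal{D})$ under finite limits. By the very definition of the $n$-Goodwillie approximation given just above the statement, $P^n_G(F)$ sits in a homotopy fiber sequence
\[
P^n_G(F) \to G \to P^n(\mathrm{cok}\,\alpha)
\]
of functors $\mathcal{C}\to\mathcal{D}$. I would first observe that the right-hand term $P^n(\mathrm{cok}\,\alpha)$ is tautologically $n$-excisive by Theorem~\ref{HATheorem6.1.1.10}, since it is by construction an object of $\Exc^n(\mathcal{C},\,\mathcal{D})$, and the middle term $G$ is $n$-excisive by hypothesis.

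Next I would appeal to the standard fact that $\Exc^n(\mathcal{C},\,\mathcal{D})$ is closed under finite limits in $\Fun(\mathcal{C},\,\mathcal{D})$. This is immediate from the defining condition: a functor $H$ is $n$-excisive exactly when it carries each strongly coCartesian $n$-cube $X_\bullet$ in $\mathcal{C}$ to a Cartesian $n$-cube $H(X_\bullet)$ in $\mathcal{D}$, and both the pointwise evaluation on such a cube and the Cartesian property (which is itself a limit condition in the target) commute with finite limits in $\mathcal{D}$. Since $P^n_G(F)$ is the pullback of the diagram $*\to P^n(\mathrm{cok}\,\alpha)\leftarrow G$ in $\Fun(\mathcal{C},\,\mathcal{D})$, a particular finite limit of $n$-excisive functors, it is itself $n$-excisive.

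The argument is essentially formal, so I do not foresee a serious obstacle. The only point to verify carefully is that the homotopy fiber of a natural transformation is computed pointwise; this is automatic because limits in functor $\infty$-categories are computed pointwise whenever the target admits them, which holds here since $\mathcal{D}$ admits finite limits in the standing assumptions of Section~\ref{sec:Goodwillie}. Thus the pointwise strongly-coCartesian-to-Cartesian condition for $P^n_G(F)$ follows from the same conditions on $G$ and $P^n(\mathrm{cok}\,\alpha)$.
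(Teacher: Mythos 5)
Your proposal is correct and follows essentially the same route as the paper: both identify $P^n_G(F)$ as the homotopy fiber of $G \to P^n(\mathrm{cok}\,\alpha)$, note that both of these are $n$-excisive, and invoke closure of $\Exc^n(\mathcal{C},\,\mathcal{D})$ under (finite or small) limits in $\Fun(\mathcal{C},\,\mathcal{D})$. Your write-up is more explicit about why the closure holds (pointwise limits, Cartesianity is a limit condition) but does not differ in substance.
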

\begin{proof}
The full subcategory $\Exc^n(\mathcal{C},\,\mathcal{D})$ is closed under small limits in $\Fun(\mathcal{C},\,\mathcal{D})$. In particular, the fiber of $G
\to P^n(\mathrm{cok}(\alpha))$ is $n$-excisive.  \qed
\end{proof}

\subsection{Approximation to \texorpdfstring{$\K$-theory}{K-theory}}
\label{sec:MGL2K} Let $\CAlg(\MSp_\infty)$ be the $\infty$-category of
motivic $\E$-rings. If we forget the unit, a motivic $\E$-ring becomes a
motivic non-unital $\E$-ring. Since algebraic cobordism is universal
among oriented motivic spectra, there is a canonical morphism
$\alpha:\mathrm{MGL} \to \K$.

For the universal morphism $\alpha: \mathrm{MGL} \to \K$, the
$n$-Goodwillie approximation $P^n_\K(\mathrm{MGL})$ inherits properties
from $\K$.
\begin{proposition}
Let $S$ be a scheme and $\alpha: \mathrm{MGL} \to \K $ the canonical
morphism of oriented motivic $\E$-rings. Then, for each $n \ge 0$, the
$n$-Goodwillie approximation $P^n_\K( \mathrm{MGL})$ is an augmented
$\K$-algebra and periodic motivic $\E$-ring.
\end{proposition}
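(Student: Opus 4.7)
The plan is to propagate the motivic $\E$-ring structure of $\K$ through the fiber sequence
\[
P^n_\K(\mathrm{MGL}) \to \K \to P^n(\mathrm{cok}\,\alpha).
\]
First I would establish that $\mathrm{cok}(\alpha)$, computed in the pointed $\infty$-category of non-unital motivic $\E$-rings, naturally inherits a $\K$-algebra structure from the fact that $\alpha$ is a morphism of motivic $\E$-rings. Because $P^n$ is a left exact left adjoint by Theorem~\ref{HATheorem6.1.1.10}, it preserves finite limits and hence the operadic algebra structure; accordingly $P^n(\mathrm{cok}\,\alpha)$ is again a non-unital $\K$-algebra, the composite $\K \to P^n(\mathrm{cok}\,\alpha)$ is a morphism of non-unital $\K$-algebras, and its fiber $P^n_\K(\mathrm{MGL})$ is a non-unital $\K$-algebra. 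Applying the operadic Smith-ideal equivalence $\mathrm{cok}: \CAlg^{\rm nu}_\K(\MSp_\infty) \rightleftarrows \CAlg_{\K//\K}(\MSp_\infty):\mathrm{ker}$ invoked earlier in the paper then yields an augmented $\K$-algebra $\K \oplus P^n_\K(\mathrm{MGL}) \to \K$.

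Periodicity then follows by $\K$-linearity: since $P^n_\K(\mathrm{MGL})$ is in particular a $\K$-module via the structure above, multiplication by the motivic Bott element $\beta \in \pi_{2,1}\K$, which is invertible on $\K$, induces an equivalence
\[
\beta\cdot(-) : \Sigma^{2,1} P^n_\K(\mathrm{MGL}) \to P^n_\K(\mathrm{MGL}),
\]
so $P^n_\K(\mathrm{MGL})$ is a periodic motivic $\E$-ring.

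The main obstacle will be the careful verification that $P^n$ preserves the non-unital $\E$-ring structure on $\mathrm{cok}(\alpha)$, i.e., lifting the left exactness of $P^n$ from the underlying $\infty$-category of non-unital motivic spectra to algebras over the non-unital commutative $\infty$-operad. The cleanest route is to realize non-unital $\E$-algebras as algebras over a reduced operad and invoke Lurie's compatibility of excisive approximations with such operadic algebras, using that $P^n$ is compatible with the Day-convolution symmetric monoidal structure on presheaves. Once that compatibility is in hand, both the augmented $\K$-algebra structure and the $\K$-module periodicity are formal consequences of the fiber sequence.
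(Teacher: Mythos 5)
Your approach differs structurally from the paper's: you argue directly by establishing that $P^n(\mathrm{cok}\,\alpha)$ carries a non-unital $\K$-algebra structure and then taking fibers, whereas the paper proceeds by induction on $n$, pulling back along the homotopy Cartesian square $P^n \to P^{n-1}$, $* \to R^n$ applied to $\mathrm{cok}\,\alpha$ and transporting the ring structure and periodicity through the homogeneous layers $R^n_\K(\MGL)$. Your periodicity argument --- invertibility of the Bott element on a $\K$-module --- is cleaner and more self-contained than the paper's detour through $\mathrm{PMGL}$-algebra structure, and it avoids having to identify $R^n_\K(\MGL) \to \K$ with an augmentation.

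That said, your justification for the pivotal step is incorrect as stated. You claim that because $P^n$ is a left exact left adjoint it ``preserves finite limits and hence the operadic algebra structure,'' but preserving finite limits does not make an endofunctor of $\Fun(\mathcal{C},\,\mathcal{D})$ lax symmetric monoidal, which is what sending operadic algebras to operadic algebras actually requires. The correct mechanism, which you only gesture at in the final hedging paragraph, is that the forgetful functor $\CAlg^{\rm nu}_\K(\MSp_\infty) \to \MSp_\infty$ is a limit-preserving right adjoint that in addition preserves sifted, hence sequential, colimits, and the construction $P^n = \varinjlim_k T_n^{\circ k}$ as a sequential colimit of right Kan extensions along punctured cubes is therefore created in the algebra category; this, not left exactness of $P^n$, is what lets you compute the tower in non-unital $\K$-algebras and then forget. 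A second unaddressed point --- shared with the paper's own terse treatment --- is that $\mathrm{cok}\,\alpha$ carrying a non-unital ring structure is not automatic: the cofiber of an $\E$-ring map is merely a module, and some extra input (for instance the orientation exhibiting $\K$ as an $\MGL$-algebra whose quotient inherits multiplication) must be invoked before either argument can get off the ground.
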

\begin{proof}
Since $\K$ has Bott periodicity, $P^0_\K(\MGL)$ is
periodic and is the trivial $\K$-augmented $\K$-algebra. We prove that
$P^n_\K(\MGL)$ is periodic and a $\K$-augmented algebra for each $n$ by
induction on $n \ge 0$.  The homotopy Cartesian square
\[
 \xymatrix@1{ P^{n}(\mathrm{cok} \alpha)  \ar[r] \ar[d]& P^{n-1}(\mathrm{cok} \alpha )  \ar[d]\\
  {*} \ar[r] & R^n (\mathrm{cok} \alpha ) }
\]
induces a homotopy Cartesian square
\[
 \xymatrix@1{  P_\K^n(\MGL) \ar[r] \ar[d] & P_\K^{n-1}(\MGL)  \ar[d]\\
    \K  \ar[r]& R^n_\K(\MGL) 
} 
\]
of motivic $\E$-rings. Since $P^{n-1} \circ R^n$ is contractible for $n
\ge 1$, $ R^n_\K(\MGL) \to P_\K^{n-1}( \MGL) \simeq \K$ is homotopic to
the augmentation to $\K$. Therefore, $R_\K^n(\MGL)$ has a
$\mathrm{PMGL}$-algebra structure implying that it is
periodic. Therefore, by the assumption of the induction, $P^n_\K(\MGL)$
is periodic and $\K$-augmented. \qed
\end{proof}

\begin{corollary}
\label{split} Let $S$ be a scheme. The canonical morphism $p: \MGL \to
\mathrm{PMGL} $ induces a homotopically split monomorphism
\[
   P^n(p): P^n_\K(\MGL) \to  P^n_\K(\mathrm{PMGL})  
\]   
for each $n \ge 0$. 
\end{corollary}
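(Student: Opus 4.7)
The plan is to construct a homotopy retraction $r \colon P^n_\K(\mathrm{PMGL}) \to P^n_\K(\MGL)$ using the periodic $\mathrm{PMGL}$-algebra structure on $P^n_\K(\MGL)$ established in the preceding proposition. Once such an $r$ satisfying $r \circ P^n(p) \simeq \mathrm{id}$ is produced, $P^n(p)$ is automatically a split monomorphism in the stable $\infty$-category of motivic spectra, as required.

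First, I would set up the fiber sequence controlling $P^n(p)$. The triangle $\MGL \xrightarrow{p} \mathrm{PMGL} \xrightarrow{\alpha_{\mathrm{P}}} \K$ together with the octahedral axiom in $\MSp_\infty$ yields a cofiber sequence $\mathrm{PMGL}/\MGL \to \mathrm{cok}\,\alpha \to \mathrm{cok}\,\alpha_{\mathrm{P}}$. Left exactness of $P^n$ (Theorem~\ref{HATheorem6.1.1.10}) then produces, after taking fibers over $\K$, a fiber sequence
\[
 \Omega\,P^n(\mathrm{PMGL}/\MGL) \to P^n_\K(\MGL) \xrightarrow{P^n(p)} P^n_\K(\mathrm{PMGL}).
\]
The cofiber of $P^n(p)$ is therefore identified with $P^n(\mathrm{PMGL}/\MGL)$, so the assertion reduces to exhibiting a retraction.

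Second, I would construct $r$ from periodicity. Since $P^n_\K(\MGL)$ is periodic, the canonical motivic $\E$-ring map $\MGL \to P^n_\K(\MGL)$ factors uniquely through the Bott localization as a morphism $\tilde p \colon \mathrm{PMGL} \to P^n_\K(\MGL)$ compatible with the $\K$-augmentations. Functoriality of the Goodwillie approximation in the source variable applied to $\tilde p$, combined with the homotopic idempotence of the $n$-excisive approximation (\cite[Remark 6.1.1.32]{HA}, as in the proof of Proposition~\ref{adjoint}), yields a natural map $r \colon P^n_\K(\mathrm{PMGL}) \to P^n_\K(P^n_\K(\MGL)) \simeq P^n_\K(\MGL)$. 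The composite $r \circ P^n(p)$ is computed by applying $P^n_\K$ to $\tilde p \circ p$, which by the factorization recovers the canonical map $\MGL \to P^n_\K(\MGL)$; naturality then identifies $r \circ P^n(p)$ with the identity on $P^n_\K(\MGL)$.

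The main obstacle will be rigorously transferring the idempotence argument from the absolute $n$-excisive approximation $P^n$ to the relative Goodwillie approximation $P^n_\K(-)$ of a source functor to a target functor, and to track the compatibility of all $\K$-augmentations throughout. The delicate technical point is that the factorization $\MGL \to \mathrm{PMGL} \to P^n_\K(\MGL)$ must cohere with the map to $\K$ at the level of coherent diagrams in $\MSp_\infty$; both routes $\MGL \to P^n_\K(\MGL) \to \K$ and the canonical $\MGL \to \K$ agree only via the universal property of the Bott localization, which requires some care in the $\infty$-categorical setting.
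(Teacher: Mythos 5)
Your proof is correct and is essentially the paper's argument, filled in with more detail: the paper simply notes that periodicity of $P^n_\K(\MGL)$ lets $\MGL \to P^n_\K(\MGL)$ factor through $\mathrm{PMGL}$ and declares the splitting, while you make explicit the retraction $r \colon P^n_\K(\mathrm{PMGL}) \to P^n_\K(P^n_\K(\MGL)) \simeq P^n_\K(\MGL)$ obtained by applying $P^n_\K$ to $\tilde p$ and invoking idempotence of $P^n$. The first paragraph (octahedral fiber sequence identifying the cofiber of $P^n(p)$ with $P^n(\mathrm{PMGL}/\MGL)$) is correct but not used; the caveat you raise about coherence of the $\K$-augmentations is real but is resolved by the universal property of Bott localization applied to the two factorizations of $\MGL \to \K$ through $\mathrm{PMGL}$.
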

\begin{proof}
Since $P^n_\K(\MGL)$ is periodic, $\MGL \to P^n_\K(\MGL) $ factors
through $\mathrm{PMGL}$. Therefore $ P^n(p): P^n_\K(\MGL) \to
P^n_\K(\mathrm{PMGL}) $ is homotopically split.  \qed
\end{proof}

\begin{lemma}
\label{ideal} Let $A$ be an $\E$-ring object of a symmetric monoidal
$\infty$-category and let $I$ denote the homotopy fiber of an augmentation
$\tilde{A} \to A$ of $A$-algebras. Then the relative cotangent complex
$L_{A/P^1_A(\tilde{A})}[-1] $ is a homotopy fiber of the induced augmentation 
\[
A \to A \otimes_{P^1_A(\tilde{A})}A.
\] 
\end{lemma}
\begin{proof} 
The cofiber sequence $I \to \tilde{A}\to A$ induces $P^1_A(\tilde{A})
\simeq \Omega_{A}\Sigma_{A}(P^1_A(\tilde{A})) \simeq A \oplus
L_{A/P^1_A(\tilde{A})}[-1] \simeq A \oplus P^1_{A}(I)$. 
The homotopy biCartesian square
\[
 \xymatrix@1{
  P^1_A(\tilde{A}) \ar[r] \ar[d] &  A \ar[d] \\
  A  \ar[r]&  A\otimes_{P^1_A(\tilde{A})} A 
 }
\]
induces a weak equivalence 
\[
   L_{A/P^1_A(\tilde{A})}[-1] \to \mathrm{fib}(  A \to A \otimes_{P^1_A(\tilde{A})}A).
\]
\qed
\end{proof}

\begin{theorem}
\label{theoremB}
Let $f: X\to Y$ be a morphism of motivic spectra. Assume that $f$ induces a weak equivalence $f^*: \K(Y) \to \K(X)$ of homotopy $K$-theory spectra. Then this weak equivalence lifts to a weak equivalence
\[
P^n(f^*): P^n_\K(\mathrm{PMGL})(Y) \to   P^n_\K(\mathrm{PMGL})(X) 
\] 
of $n$-Goodwillie approximations of the periodic algebraic cobordisms
for each $n \ge 0$.
\end{theorem}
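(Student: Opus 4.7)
The plan is to argue by induction on $n \ge 0$.

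For the base case $n = 0$, note that $P^0(\mathrm{cok}\,\alpha)$ is contractible, so by the very definition of $P^n_G(F)$ from Section~\ref{sec:app-fun}, $P^0_\K(\mathrm{PMGL}) \simeq \K$. The required weak equivalence is therefore precisely the hypothesis $f^*\colon \K(Y) \to \K(X)$.

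For the inductive step, assume $P^{n-1}_\K(\mathrm{PMGL})(f^*)$ is a weak equivalence. I invoke the homotopy Cartesian square
\[
\xymatrix@1{
P^n_\K(\mathrm{PMGL}) \ar[r] \ar[d] & P^{n-1}_\K(\mathrm{PMGL}) \ar[d] \\
\K \ar[r] & R^n_\K(\mathrm{PMGL})
}
\]
produced in the proof of the preceding proposition (with $\mathrm{PMGL}$ in place of $\MGL$; the argument passes over verbatim because $\mathrm{PMGL}$ is an augmented $\K$-algebra via $p$). Evaluating the square at $X$ and at $Y$ and using naturality of $f^*$ yields a morphism of homotopy Cartesian squares of spectra. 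Since homotopy pullbacks in a stable $\infty$-category preserve weak equivalences, it is enough to show that $f^*$ induces weak equivalences on each of the remaining three corners $\K$, $P^{n-1}_\K(\mathrm{PMGL})$, and $R^n_\K(\mathrm{PMGL})$. The first is the hypothesis, the second is the inductive assumption, so the remaining task is to handle $R^n_\K(\mathrm{PMGL})(f^*)$.

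For this, the key input is Lemma~\ref{ideal}: the relative cotangent complex $L_{\K / P^1_\K(\mathrm{PMGL})}$ is a retract of $\K[1]$, and hence carries a canonical $\K$-module structure. Combining this with Goodwillie's classification of $n$-homogeneous functors into stable $\infty$-categories as $\Sigma_n$-coinvariants of iterated smash products of their derivatives, each homogeneous layer $R^n(\mathrm{cok}\,\alpha)$ is identified as arising from smash products of $\K$-modules (starting from the first derivative supplied by Lemma~\ref{ideal} and proceeding inductively along the Taylor tower). Consequently $R^n_\K(\mathrm{PMGL})$, as the fiber of $\K \to R^n(\mathrm{cok}\,\alpha)$, is determined by $\K$-linear data, so the assumption that $f^*$ is a weak equivalence on $\K$ forces $R^n_\K(\mathrm{PMGL})(f^*)$ to be one as well.

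The trickiest step is propagating the $\K$-linearity from the first layer, where Lemma~\ref{ideal} applies directly, to higher layers. This requires combining the inductive structure of the Taylor tower with the smash-product/fixed-point description of $n$-homogeneous functors to make each $R^n(\mathrm{cok}\,\alpha)$ manifestly built out of $\K$-module operations; once this identification is set up, invariance under $f^*$ is automatic from its $\K$-equivalence property, and the inductive step closes.
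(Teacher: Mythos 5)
Your reduction is right: you pass to the homotopy Cartesian square from the preceding proposition and reduce the inductive step to showing that $R^n_\K(\mathrm{PMGL})(f^*)$ is a weak equivalence. This matches the paper's structure. Where you diverge is in how you handle $R^n$. The paper argues \emph{directly for each $n$}: since $\mathrm{Homgen}^n(\CAlg_{V//V},\,\mathcal{S})$ is stable, the unit $R^n_\K(\mathrm{PMGL}) \to \Omega^1_\K\Sigma^1_\K R^n_\K(\mathrm{PMGL})$ is an equivalence, so the augmentation $R^n_\K(\mathrm{PMGL}) \to \K$ is a square-zero extension; then Corollary~\ref{split} and Lemma~\ref{ideal} are used to show the relative cotangent complex $L_{\K/R^n_\K(\mathrm{PMGL})}$ is a retract of $\K$, so the natural comparison of square-zero extensions over $Y$ and over $X$ is controlled by $f^*\colon \K(Y)\to\K(X)$ alone.

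Your route instead appeals to the classification of $n$-homogeneous functors via their $n$-th derivatives and $\Sigma_n$-coinvariants of smash powers, and asserts that each layer $R^n(\mathrm{cok}\,\alpha)$ is ``built out of $\K$-module operations'' starting from the fact that the first layer is $\K$-linear by Lemma~\ref{ideal}. This is the genuine gap, and you flag it yourself in your last paragraph. Lemma~\ref{ideal} says nothing about the higher derivatives $\partial_n(\mathrm{cok}\,\alpha)$ for $n\ge 2$: the Goodwillie derivative classification produces spectra with $\Sigma_n$-action, and there is no reason offered why those should inherit a $\K$-module structure, nor why the assembly map respects it. Propagating $\K$-linearity from the first layer up the tower is precisely the kind of claim that would need a separate argument, and you do not give one. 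The paper sidesteps this entirely by not inducting through the derivative description at all: it observes that \emph{each} $R^n_\K(\mathrm{PMGL})$, on its own, is a square-zero extension of $\K$ whose cotangent complex is a retract of $\K$, and that a retract of the weak equivalence $\K(Y)\to\K(X)$ is again a weak equivalence. You should adopt that direct argument for $R^n$ rather than trying to carry $\K$-linearity along the Taylor tower.
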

\begin{proof}
For each $n\ge 1$, by construction of relative Goodwillie approximations,
there is a homotopy Cartesian square
\[
 \xymatrix@1{
   P^n_\K(\mathrm{PMGL}) \ar[r] \ar[d] & P^{n-1}_\K(\mathrm{PMGL}) \ar[d] \\
   \K \ar[r] & R^n_\K(\mathrm{PMGL}).
 }
\]
Hence it is enough to prove that, for each $n\ge 1$,
\[
R^n_\K(\mathrm{PMGL})(Y)\to R^n_\K(\mathrm{PMGL})(X)
\]
is a weak equivalence.

For $n=1$, Lemma~\ref{ideal} gives that
$L_{\K/P^1_\K(\mathrm{PMGL})}[-1]$ is a homotopy retract of $\K$. Therefore
\[
L_{\K(Y)/P^1_\K(\mathrm{PMGL})(Y)}[-1]\to
L_{\K(X)/P^1_\K(\mathrm{PMGL})(X)}[-1]
\]
is a weak equivalence, and so are
\[
P^1_\K(\mathrm{PMGL})(Y)\to P^1_\K(\mathrm{PMGL})(X),\qquad
R^1_\K(\mathrm{PMGL})(Y)\to R^1_\K(\mathrm{PMGL})(X).
\]

Now let $n\ge 2$. Let $\mathcal{H}^n_\K$ denote the full subcategory of
$n$-homogeneous functors in the ambient functor category. By Goodwillie
theory (\cite[Chapter 6]{HA}), $\mathcal{H}^n_\K$ is a stable
$\infty$-category. Hence the unit map
\[
R^n_\K(\mathrm{PMGL})\to \Omega^1_\K\Sigma^1_\K R^n_\K(\mathrm{PMGL})
\]
is a weak equivalence, and the augmentation
$\varepsilon_n:R^n_\K(\mathrm{PMGL})\to \K$ is a homotopically square-zero
extension.

Applying Lemma~\ref{ideal} to $\varepsilon_n$ objectwise at $Y$ and $X$,
the relative cotangent complexes
\[
L_{\K(Y)/R^n_\K(\mathrm{PMGL})(Y)}\to
L_{\K(X)/R^n_\K(\mathrm{PMGL})(X)}
\]
are identified with the corresponding fibers of the augmentation squares.
Since $f^*: \K(Y)\to\K(X)$ is a weak equivalence, base change along $f^*$
induces an equivalence on module categories; therefore the above map of
relative cotangent complexes is a weak equivalence. By the square-zero
description of $\varepsilon_n$, this implies
\[
R^n_\K(\mathrm{PMGL})(Y)\to R^n_\K(\mathrm{PMGL})(X)
\]
is a weak equivalence.

Finally, applying the first homotopy Cartesian square objectwise at $Y$ and
$X$, and using induction on $n$, we conclude that
\[
P^n_\K(\mathrm{PMGL})(Y)\to P^n_\K(\mathrm{PMGL})(X)
\]
is a weak equivalence for any $n\ge 0$. \qed
\end{proof}

\begin{corollary}
Given a morphism $f:X\to Y$ of noetherian $S$-schemes of finite type that induces a weak equivalence $f^*: \K(Y) \to \K(X)$ of homotopy $K$-theory
spectra, one has a weak equivalence
\[
P^n(f^*): P^n_\K(\mathrm{MGL})(Y) \to   P^n_\K(\mathrm{MGL})(X) 
\] 
of $n$-Goodwillie approximations of algebraic cobordism for each $n
\ge 0$.   
\end{corollary}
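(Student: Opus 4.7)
The plan is to derive this corollary from Theorem~\ref{theoremB} by the retract principle, together with Corollary~\ref{split}. First I would recall that by Corollary~\ref{split}, the morphism $P^n(p): P^n_\K(\MGL) \to P^n_\K(\mathrm{PMGL})$ induced by the canonical $p: \MGL \to \mathrm{PMGL}$ is a homotopically splitting monomorphism of functors on schemes, so there is a natural retraction $r$ satisfying $r \circ P^n(p) \simeq \mathrm{id}_{P^n_\K(\MGL)}$. This splitting is natural in the scheme variable because it arises from applying the endofunctor $P^n_\K(-)$ to the morphism $p$ of oriented motivic $\E$-rings, and functoriality is automatic.

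Next I would evaluate this retract datum at both $X$ and $Y$, producing a commutative diagram in the stable $\infty$-category $\MSp_\infty$ in which the map $P^n(f^*): P^n_\K(\MGL)(Y) \to P^n_\K(\MGL)(X)$ appears as a retract of the map $P^n(f^*): P^n_\K(\mathrm{PMGL})(Y) \to P^n_\K(\mathrm{PMGL})(X)$. The hypothesis that $f^*: \K(Y) \to \K(X)$ is a weak equivalence then lets me apply Theorem~\ref{theoremB} to conclude that this latter map is a weak equivalence.

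Finally, since the class of weak equivalences in a stable $\infty$-category is closed under retracts, the retract $P^n(f^*): P^n_\K(\MGL)(Y) \to P^n_\K(\MGL)(X)$ is also a weak equivalence, which is the required conclusion.

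The argument is essentially formal once Theorem~\ref{theoremB} and Corollary~\ref{split} are in hand; the only point meriting verification is the naturality of the splitting furnished by Corollary~\ref{split} in the scheme argument, which however follows immediately from the functoriality of the Goodwillie approximation construction $P^n$ and the fact that $p$ is a morphism of presheaves of motivic $\E$-rings on $\Sm_S$. I therefore do not anticipate any substantive obstacle.
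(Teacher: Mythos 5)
Your proposal is correct and takes essentially the same route as the paper, which simply states that the corollary is a direct consequence of Theorem~\ref{theoremB} and Corollary~\ref{split}; the retract argument you spell out (naturality of the splitting from $P^n(p)$, then closure of equivalences under retracts) is precisely the mechanism the paper has in mind.
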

\begin{proof}
This is a direct result of Theorem~\ref{theoremB} and
Corollary~\ref{split}. \qed
\end{proof}

We give an analogue of the Gabber rigidity theorem for nilpotent
approximation in~\cite{Kato2023nilpotent}:

\begin{theorem}[{\rm cf.}~\cite{Kato2023nilpotent} Theorem 5.12]
\label{Rigidity} Let $A$ be a noetherian commutative unital ring and $I$ an ideal
such that $(A,\,I)$ is a Henselian pair. Assume that the characteristic
of the residue ring $A/I$ is a positive prime $p$, and that $\ell$ is a
positive integer invertible in $A/I$. Then the closed immersion
$i^*:\Spec{A/I} \to \Spec{A}$ induces a weak equivalence of
$n$-Goodwillie approximations
\[
 i^*:  P^n_{\K/\ell}(\mathbf{MGL}/{\ell})( \Spec{A} ) \to 
P^n_{\K/\ell}(\mathbf{MGL}/{\ell})( \Spec{A/I}), 
\]
where $\mathbf{MGL}/{\ell}$ and $\K/\ell$ denote the mod-$\ell$ algebraic
cobordism and $K$-theory, respectively. \qed
\end{theorem}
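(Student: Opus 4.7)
The plan is to combine two inputs: the classical Gabber rigidity theorem for homotopy $K$-theory with mod-$\ell$ coefficients, and a mod-$\ell$ variant of Theorem~\ref{theoremB}. The first input asserts that under the hypotheses (Henselian pair $(A,I)$, residue characteristic $p$, $\ell$ a unit in $A/I$), the pullback $i^{*}:\K/\ell(\Spec A)\to \K/\ell(\Spec A/I)$ is a weak equivalence of motivic spectra. Once this is established, the Goodwillie-theoretic step is to promote this equivalence through the tower $P^{n}_{\K/\ell}(\mathbf{MGL}/\ell)$ exactly as in the proof of Theorem~\ref{theoremB}, which reduces the verification at each level to the behavior of $\K/\ell$ on the two schemes.

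More precisely, I would first recall the mod-$\ell$ version of Corollary~\ref{split}: since $\mathbf{MGL}/\ell \to P^{n}_{\K/\ell}(\mathbf{MGL}/\ell)$ factors through the periodic quotient $\mathrm{PMGL}/\ell$, it suffices to show the corresponding statement for $P^{n}_{\K/\ell}(\mathrm{PMGL}/\ell)$. Then I would rerun the argument of Theorem~\ref{theoremB} in the mod-$\ell$ setting, inducting on $n$. The base case $n=0$ is immediate because $P^{0}_{\K/\ell}(\mathrm{PMGL}/\ell)\simeq \K/\ell$, and for $n=1$ Lemma~\ref{ideal} identifies $L_{\K/\ell\,/\,P^{1}_{\K/\ell}(\mathrm{PMGL}/\ell)}$ as a retract of $(\K/\ell)[1]$, so the $\K/\ell$-equivalence lifts.

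For the inductive step, I would invoke the fiber square coming from Theorem~6.1.2.4 of \cite{HA}, applied in the mod-$\ell$ setting, and reduce to showing that $R^{n}_{\K/\ell}(\mathrm{PMGL}/\ell)(\Spec A)\to R^{n}_{\K/\ell}(\mathrm{PMGL}/\ell)(\Spec A/I)$ is a weak equivalence. Stability of $\mathrm{Homgen}^{n}(\CAlg_{V//V},\mathcal S)$ implies that the augmentation $R^{n}_{\K/\ell}(\mathrm{PMGL}/\ell)\to \K/\ell$ is a homotopically square-zero extension, so its cotangent complex is again a retract of $\K/\ell[1]$; the Gabber-rigidity equivalence on $\K/\ell$ then transports this cotangent datum from $\Spec A$ to $\Spec A/I$, yielding the required equivalence of $R^{n}$-terms, and the Cartesian square completes the induction.

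The main obstacle is justifying the mod-$\ell$ version of the structural ingredients borrowed from Theorem~\ref{theoremB}: one needs Gabber rigidity in the form of a weak equivalence of the full homotopy $K$-theory presheaves with mod-$\ell$ coefficients (not merely an isomorphism on homotopy groups), together with the fact that the reduction modulo $\ell$ commutes appropriately with the Goodwillie tower $P^{n}_{-}$ and with the cotangent-complex identifications of Lemma~\ref{ideal}. Once this compatibility is in place the proof is formally parallel to Theorem~\ref{theoremB}; the delicate point is verifying that the homotopy cofiber sequences defining $\mathbf{MGL}/\ell$ and $\K/\ell$ are preserved by the excisive approximation $P^{n}$ and its residual $R^{n}$, so that the cotangent computations used in the inductive step remain valid after reducing coefficients.
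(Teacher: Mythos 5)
Your proposal matches the argument the paper implicitly intends: the theorem is stated with a \qed and no separate proof block, since it is meant to follow directly by combining the classical Gabber rigidity equivalence for mod-$\ell$ homotopy $K$-theory of Henselian pairs with the lifting argument of Theorem~\ref{theoremB} (and Corollary~\ref{split}), applied to $\K/\ell$ and $\mathbf{MGL}/\ell$ in place of $\K$ and $\mathrm{MGL}$. Your identification of the two inputs, the reduction to $\mathrm{PMGL}/\ell$ via the splitting, and the induction through the fiber square with the $R^{n}$-terms are exactly the intended route; the compatibility issues you flag (reduction mod $\ell$ versus the Goodwillie tower and the cotangent identifications of Lemma~\ref{ideal}) are the genuine points one must check, and the paper silently takes them for granted.
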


\begin{remark}
By Proposition~\ref{square-zero}, excisive approximation of the homotopy
cofiber $\mathrm{PMGL} \to \K $ and one-nilpotent (square-zero)
approximation in \cite{Kato2023nilpotent} are equivalent formulations.
\end{remark}

\bibliographystyle{alphadin}
{\raggedright\sloppy\bibliography{bibkato}\par}
\nocite{Jardine, Jardine2}
\nocite{K-MGL}
\nocite{kato2023mathematics}
\nocite{MV}
\nocite{zbMATH03811819}
\nocite{zbMATH07335469}
\nocite{zbMATH05706074}
\nocite{zbMATH02095715}
\nocite{zbMATH01787461}
\nocite{Hirschhorn}
\nocite{Hoveybook}
\nocite{HT}
\nocite{HA}
\nocite{VoeH}
\nocite{V}
\nocite{preygel}
\end{document}